\newenvironment{@abssec}[1]{%
    \if@twocolumn

      \section*{#1}%
    \else

      \vspace{.05in}\footnotesize
      \parindent .2in
 {\upshape\bfseries #1. }\ignorespaces
    \fi}
\par\vspace{.1in}\fi}
\newenvironment{keywords}{\begin{@abssec}{\keywordsname}}{\end{@abssec}}
\newenvironment{AMS}{\begin{@abssec}{\AMSname}}{\end{@abssec}}
\newcommand\keywordsname{Key words}
\newcommand\AMSname{AMS subject classifications}
\newcommand\AMname{AMS subject classification}
\newcommand\restr[2]{{
\left.\kern-\nulldelimiterspace 
#1 
\vphantom{|} 
\right|_{#2} 
}}
\newtheorem{theorem}{Theorem}[section]
\newtheorem{lemma}[theorem]{Lemma}
\newtheorem{corollary}[theorem]{Corollary}
\newtheorem{proposition}[theorem]{Proposition}
\newtheorem{remark}[theorem]{Remark}
\newtheorem{problem}{Problem}
\newtheorem{prob}[theorem]{Problem}
\newtheorem{conjecture}[theorem]{Conjecture}
\newcommand{\RR}{\mathbb{R}}
\renewcommand{\SS}{\mathbb{S}}
\def\XXint#1#2#3{{\setbox0=\hbox{$#1{#2#3}{\int}$}
\vcenter{\hbox{$#2#3$}}\kern-.5\wd0}}
\newcommand{\link}{\mathop{\circ\kern-.35em -}}
\newcommand{\ol}{\overline}
\newcommand{\pa}{\partial}
\newcommand{\dv}{\mathop{\mathrm{div}}}
\newcommand{\gr}{\nabla}
\newcommand{\al}{\alpha}
\newcommand{\De}{\Delta}
\newcommand{\si}{\sigma}
\newcommand{\om}{\omega}
\newcommand{\Om}{\Omega}
\newcommand{\rn}{{\mathbb{R}}^N}
\newcommand{\sg}{\sigma}
\newcommand\setbld[2]{\left\{ #1 \;\middle |\; #2\right\}}
\newcommand{\C}{\mathcal{C}}
\newcommand{\cC}{\mathcal{C}}
\title{\bf On a two-phase Serrin-type problem \\ and its numerical computation\thanks{This research was partially supported by the Challenging Exploratory Research No.16K13768 of Japan Society for the Promotion of Science and the 	
Grant-in-Aid for JSPS Fellows No.18J11430.}}
\author{Lorenzo Cavallina\thanks{Research Center for Pure and Applied Mathematics,
Graduate School of Information Sciences, Tohoku
University, Sendai, 980-8579, Japan ({\tt cava@ims.is.tohoku.ac.jp}, {\tt  yachimura@ims.is.tohoku.ac.jp}).} \, and Toshiaki Yachimura\footnotemark[2]
}
\date{}
\begin{document}

\maketitle

\begin{abstract}
We consider an overdetermined problem of Serrin-type with respect to an operator in divergence form with piecewise constant coefficients. We give sufficient condition for unique solvability near radially symmetric configurations by means of a perturbation argument relying on shape derivatives and the implicit function theorem. This problem is also treated numerically, by means of a steepest descent algorithm based on a Kohn--Vogelius functional.  
\end{abstract}

\begin{keywords}
two-phase, overdetermined problem, Serrin problem, shape derivative, implicit function theorem, Kohn--Vogelius functional, augmented Lagrangian.
\end{keywords}

\begin{AMS}
35N25, 35J15, 35Q93, 65K10.
\end{AMS}

\pagestyle{plain}
\thispagestyle{plain}

\section{Introduction and main results}\label{introduction}
Let $(D,\Omega)$ be a pair of sufficiently smooth bounded domains of $\RR^{N}$ ($N\geq2$) such that $\overline{D} \subset \Omega$. Moreover, let $n$ denote the outward unit normal vector of $\Omega$. 
In this paper, for $c\in \RR$, we consider the following overdetermined problem:
\begin{equation}\label{odp}
\begin{cases}
-\dv(\sigma \gr u)=1 \quad \textrm{ in }\Omega,\\
u=0\quad \textrm{ on }\partial\Omega,\\
\partial_n u= c \quad \textrm{ on }\partial\Omega, 
\end{cases}
\end{equation}
where $\sigma = \sigma(x)$ is the piecewise constant function given by 
\begin{equation*}
\sigma(x) = 
\begin{cases}
\si_c \quad &\text{in} \,\, D, \\
1 \quad &\text{in} \,\, \Omega \setminus D  
\end{cases}
\end{equation*}
and $\si_c$ is a positive constant such that $\si_c \neq 1$. 

Notice that such an overdetermined problem does not admit a solution in general, and that, whenever it does, the parameter $c$ must be equal to $c(\Omega)=-|\Omega|/|\pa\Omega|$ by a simple integration by parts. In what follows, we will say that a pair of domains $(D,\Omega)$ is a solution of problem \eqref{odp} whenever problem \eqref{odp} is solvable for $\sg=\sg(D,\Omega)$. 
We now define the so-called inner problem and outer problem associated to problem \eqref{odp}. 
\begin{problem}[Inner problem]
For a given domain $\Omega$ and a real number $0<V_0<|\Omega|$, find a domain $D\subset\ol D \subset \Omega$ with volume $|D|=V_0$, such that the pair $(D,\Omega)$ is a solution of the overdetermined problem \eqref{odp}.
\end{problem}

\begin{problem}[Outer problem]
For a given domain $D$ and a real number $V_0>|D|$, find a domain $\Omega\supset \ol D$ with volume $|\Omega|=V_0$, such that the pair $(D,\Omega)$ is a solution of the overdetermined problem \eqref{odp}. 
\end{problem}

\begin{figure}[h]
\centering
\includegraphics[width=0.45\linewidth]{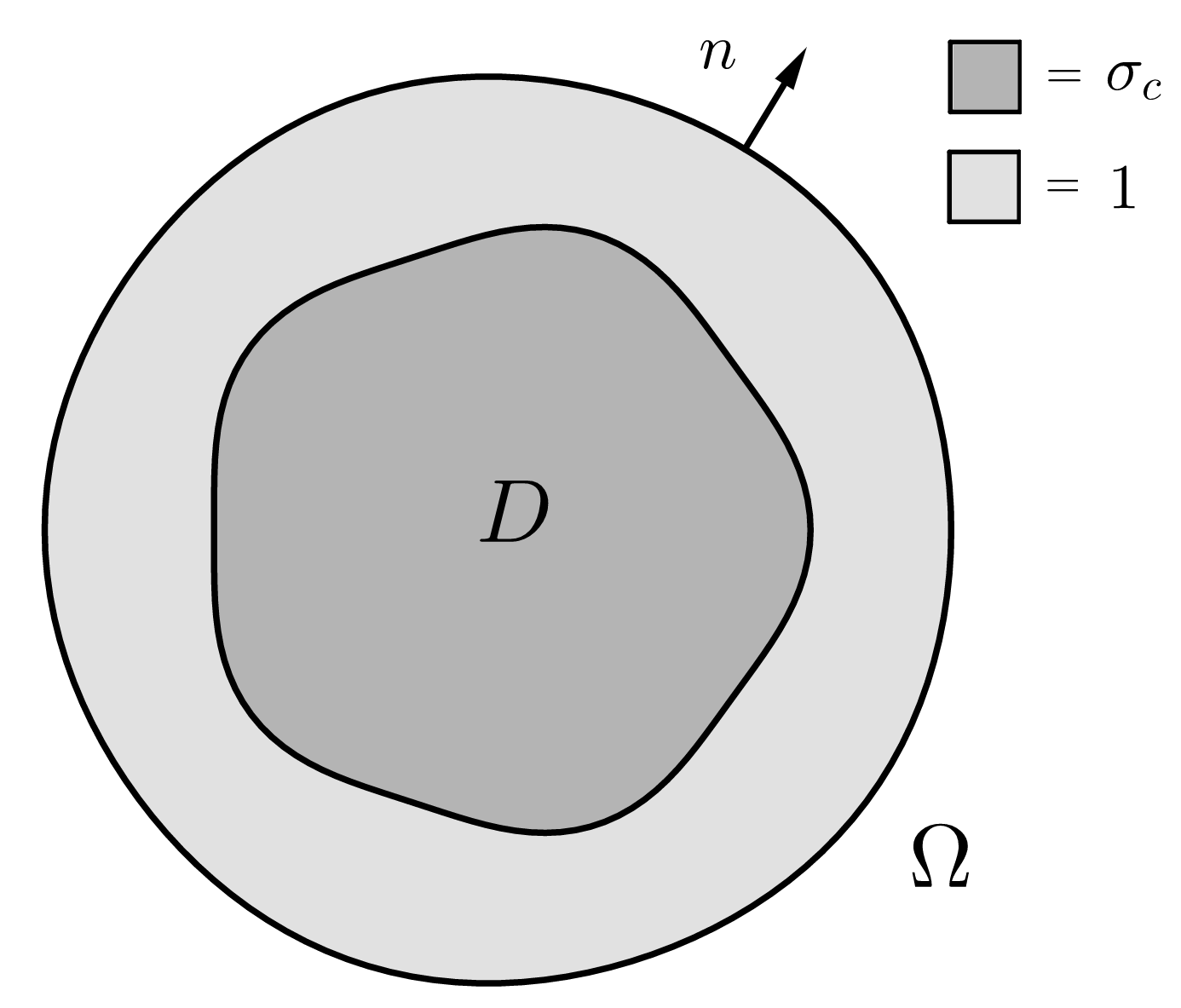}
\caption{Problem setting} 
\label{nonlinear}
\end{figure}

Problem 2 naturally arises as the optimality condition of the following constrained maximization problem for a given $D$: 
\begin{equation}\label{max pb}
    \max_{|\Om|=V_0} \int_\Om \sg |\gr v|^2 = \max_{|\Om|=V_0} \int_\Om v,
    \end{equation}
where the maximization is considered over all domains $\Om$ that satisfy $\Om\supset\ol D$ and $v$ is the solution to the following problem with Dirichlet boundary condition. 
\begin{equation}\label{pb d}
\begin{cases}
-\dv(\sg \gr v)=1 \quad\text{ in } \Om,\\
v=0 \quad \text{ on }\pa\Omega.
\end{cases}
\end{equation}

Physically speaking, problem \eqref{max pb} consists in finding the best coating of a given body $D$ in order to maximize its torsional rigidity under mass constraint. This maximization problem has been analyzed in \cite{cava2018}. In particular, the optimality condition $\pa_n v=c$ on $\pa\Om$, although not explicitly stated there, is a consequence of the computations in \cite[Theorem 3.3]{cava2018}, adapted to the case of volume preserving perturbations acting on a general $\Om$. Finally, we remark that the results of \cite{cava2018} imply that Problem 2 and the constrained maximization problem \eqref{max pb} are not equivalent. Indeed, although all sufficiently smooth solutions of \eqref{max pb} must solve Problem 2, not all solutions to Problem 2 are maximizers of \eqref{max pb}. A counterexample is given in \cite[Theorem 4.5]{cava2018}, where it is shown that the configuration given by two concentric balls is not even a local maximizer for \eqref{max pb} when $\sg_c<1$.

As far as Problem 1 is concerned, by reasoning as above, we see that it can be linked to \eqref{max pb} in the following indirect way.
Problem 1 gives a necessary condition for those domains $D$, which make a given $\Om$ optimal in the sense of \eqref{max pb}.
On a final note, we remark that Problem 1 does not yield the optimality condition for the maximization problem 
\begin{equation}\label{korejanai}
    \max_{|D|=V_0} \int_\Om \sg |\gr v|^2 = \max_{|D|=V_0} \int_\Om v, 
\end{equation}
where the maximization is taken over all domains $D$ satisfying $\ol D\subset \Om$. 
Indeed, the optimality condition of the maximization problem \eqref{korejanai} is given by an overdetermined condition involving the gradient of $v$ on the interface $\pa D$. This follows from the fact that the shape derivative of the Dirichlet energy $\int_\Om \sg |\gr v|^2$ vanishes for all volume preserving perturbations of the domain $D$ (see \cite[Remark 4.4]{cavaphd}). Similar shape optimization problems involving two-phase conductors have been studied in several papers, especially eigenvalue problems. We refer to \cite{CL1996, MT1997A, MT1997B, CMS2009, CLM2012, L2014} and the references given there. 

When $D$ is empty (one-phase setting), the overdetermined problem \eqref{odp} has been studied in the celebrated paper of Serrin \cite{Se1971}. He proved that, in that case, the overdetermined problem \eqref{odp} is solvable if and only if the domain $\Omega$ is a ball. His proof relied on the method of moving planes introduced by Alexandrov \cite{Ale1958}. That is why the overdetermined problem in the one-phase setting is called {\em Serrin's problem}. Many mathematicians, inspired by the work of Serrin, studied similar overdetermined problems for various operators, where the overdetermination consists in prescribing the value of the normal derivative on the boundary (such problems are usually referred to as overdetermined problems of {\em Serrin-type} in the literature). See for example \cite{BNST2008, BHS2014, MagnaniniPoggesi2017, NT2018} and references therein.     

Another geometrical setting for a similar kind of overdetermined problem is obtained when $D$ is a hole, that is, $\Omega\setminus \ol D$ is a doubly connected domain. There are many results concerning overdetermined problems where the value of the normal derivative is prescribed on one of the two connected components of the boundary of $\Omega\setminus \ol D$. Such problems are usually called overdetermined problems of {\em Bernoulli-type}. The main difference with the usual Serrin-type problems is that the solution of a Bernoulli-type problem is not necessarily radially symmetric, even when the overdetermination consists in the normal derivative being constant on the boundary. Indeed, the part of $\pa(\Omega\setminus \ol D)$ where the overdetermination is imposed behaves like a free boundary, that inherits its shape from the geometry of the other component. Theoretical and numerical results about Bernoulli-type problems can be seen in \cite{Beurling 1957, altcaff, FR1997, Henrot Shahgholian 2002, BCT2005, IKP2006, HIKKP2009, LP2012, BPST2017}, and the references therein. 

To our knowledge, there are only a few results concerning problem \eqref{odp} when $D$ is not empty. The paper \cite{camasa} dealt with the inner problem of the overdetermined problem \eqref{odp}. The authors proved the local existence and uniqueness for the inner problem near concentric balls. They also treated the overdetermined problem of two-phase heat conductors and gave symmetry results for the domain.

The purpose of this paper is twofold. First, we study the local existence and uniqueness for the outer problem near concentric balls. Second, we deal with the numerical computation of the solution to the outer problem. 
In what follows, we state the main results of this paper. 

\begin{theorem}\label{mainthm1}
Let us define
\begin{equation*}
\begin{aligned}
s(k)&= \frac{k(N+k-1)-(N+k-2)(k-1)R^{2-N-2k}}{k( N+k-1)+k(k-1)R^{2-N-2k}} \text{ for }k = 1,2,\ldots,\\
\Sigma&=\setbld{s\in (0,\infty)}{s=s(k)
\,\text{ for some }k = 1,2,\ldots}.
\end{aligned}
\end{equation*}
and let $B_{R} \subset B_{1}$ denote concentric balls of radius $R$ and $1$ respectively. 
If $\si_c \notin \Sigma$, then for every domain $D$ of class $\cC^{2,\alpha}$ sufficiently close to $B_{R}$ in the $\cC^{2,\alpha}$-norm (in a sense made more precise in the statement of Theorem \ref{ift applied}), there exists a domain $\Omega$ of class $\cC^{2,\alpha}$ sufficiently close to $B_{1}$ in the $\cC^{2,\alpha}$-norm such that Problem $2$ admits a solution for the pair $(D,\Omega)$. 
\end{theorem}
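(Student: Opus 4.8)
The plan is to set up the outer problem as the vanishing of a suitable nonlinear map between Banach spaces of domain perturbations and apply the implicit function theorem, exactly in the spirit of the perturbation argument announced in the abstract. First I would fix the reference configuration $(B_R, B_1)$ and parametrize nearby domains by their boundary normal displacements: a domain $D$ close to $B_R$ corresponds to a function $\psi \in \cC^{2+\alpha}(\partial B_R)$ with small norm, and likewise a domain $\Omega$ close to $B_1$ corresponds to $\varphi \in \cC^{2+\alpha}(\partial B_1)$. Given $(\psi,\varphi)$, let $u = u(\psi,\varphi)$ denote the unique weak solution of the transmission problem $-\dv(\sigma \nabla u)=1$ in $\Omega$, $u=0$ on $\partial\Omega$, with $\sigma$ the two-phase coefficient determined by $D$; standard elliptic regularity for transmission problems gives $u \in \cC^{2+\alpha}$ up to each side of $\partial D$ and up to $\partial\Omega$. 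The outer problem is then solvable for $(D,\Omega)$ precisely when the overdetermination $\partial_n u = c(\Omega) = -|\Omega|/|\partial\Omega|$ holds on $\partial\Omega$, together with the volume constraint $|\Omega| = V_0$. I would encode this as a map
\[
\Phi(\psi,\varphi) = \Bigl(\partial_n u(\psi,\varphi) - c(\Omega) \;-\; \textstyle\fint_{\partial\Omega}\bigl(\partial_n u - c(\Omega)\bigr),\ \ |\Omega| - V_0\Bigr),
\]
taking values in $\cC^{1+\alpha}_0(\partial B_1)\times\RR$ (the subscript $0$ denoting mean-zero functions, which is the right target because integrating the PDE forces the mean of $\partial_n u$), after pulling everything back to the fixed boundary $\partial B_1$ via the diffeomorphism induced by $\varphi$. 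When $V_0 = |B_1|$ one has $\Phi(0,0)=0$ since the concentric-ball configuration solves \eqref{odp}; in general one first rescales so that the reference ball has the prescribed volume, which only rescales the thresholds in $\Sigma$ trivially — I would remark this reduction at the outset. The goal is to solve $\Phi(\psi,\varphi)=0$ for $\varphi=\varphi(\psi)$ near $\psi=0$.

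The analytic core is the computation of the partial shape derivative $\partial_\varphi \Phi(0,0)$ and the proof that it is an isomorphism of $\cC^{2+\alpha}_0(\partial B_1)$ onto $\cC^{1+\alpha}_0(\partial B_1)\times\RR$ (or, more precisely, that its restriction to the mean-zero subspace, after accounting for the scalar volume equation, is invertible). To compute it I would differentiate the state equation: the shape derivative $u' = u'[\varphi]$ of $u$ with respect to a normal perturbation $\varphi$ of $\partial\Omega$ only (keeping $D=B_R$ fixed) solves a homogeneous transmission problem $-\dv(\sigma\nabla u')=0$ in $B_1$, with the transmission conditions across $\partial B_R$ unchanged, and the boundary condition $u' = -\varphi\,\partial_n u = -c\,\varphi$ on $\partial B_1$ coming from differentiating $u=0$ on the moving boundary. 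Then the linearized overdetermination operator sends $\varphi \mapsto \partial_n u' + (\text{curvature/geometry terms involving }\varphi)$ on $\partial B_1$. Expanding in spherical harmonics $Y_k$ (eigenfunctions of the Laplace–Beltrami operator on $\partial B_1$ and $\partial B_R$), the transmission problem decouples mode by mode: for each $k$, writing $u' = (a_k r^k + b_k r^{-(N+k-2)})Y_k$ in the annulus and $u' = c_k r^k Y_k$ in the inner ball (regularity at the origin kills the singular term there), the transmission conditions at $r=R$ and the Dirichlet datum at $r=1$ pin down the coefficients, and the resulting diagonal entry of $\partial_\varphi\Phi(0,0)$ on the $k$-th mode turns out to be (a nonzero constant times) a rational expression in $R$, $N$, $k$, $\sigma_c$. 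The content of the hypothesis $\sigma_c \notin \Sigma$ is exactly that this diagonal entry never vanishes: a short computation should identify $s(k)$ as the value of $\sigma_c$ that would make the $k$-th eigenvalue degenerate, so that $\sigma_c \notin \Sigma$ makes every mode invertible with eigenvalues bounded away from $0$ for large $k$ (they should behave like a fixed nonzero limit as $k\to\infty$, since $R^{2-N-2k}\to 0$), giving a bounded inverse between the Hölder spaces by a standard multiplier/Schauder argument. The mode $k=0$ is the one tied to the scalar volume equation and must be checked separately; typically $k=1$ corresponds to translations and deserves care, but here since we also perturb $D$ the translational mode does not produce a genuine obstruction — I would verify $s(1)$ is handled correctly or excluded appropriately.

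Once $\partial_\varphi\Phi(0,0)$ is shown to be an isomorphism, the implicit function theorem in Banach spaces yields a $\cC^1$ (indeed smooth, if $\Phi$ is smooth in its arguments, which follows from smooth dependence of transmission-problem solutions on the domain) map $\psi \mapsto \varphi(\psi)$ with $\varphi(0)=0$ and $\Phi(\psi,\varphi(\psi))=0$ for all $\psi$ in a neighborhood of $0$ in $\cC^{2+\alpha}(\partial B_R)$; translating back, for every $D$ of class $\cC^{2+\alpha}$ sufficiently close to $B_R$ there is a $\cC^{2+\alpha}$ domain $\Omega$ close to $B_1$ solving the outer problem, which is the assertion of Theorem \ref{mainthm1}. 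I expect the main obstacle to be the shape-derivative bookkeeping for the moving \emph{outer} boundary in the presence of the fixed transmission interface: one must get the boundary condition for $u'$ on $\partial B_1$ right (including whether to use the Eulerian/shape derivative or the Lagrangian/material derivative, and the correct sign and curvature terms when pulling $\partial_n u'$ back to the fixed sphere), and then carry out the spherical-harmonic reduction cleanly enough to recognize the function $s(k)$ in the statement. A secondary technical point is ensuring the function spaces are chosen so that $\Phi$ is well-defined and $\cC^1$ — in particular that the normal-derivative trace lands in $\cC^{1+\alpha}$ — which is where the $\cC^{2+\alpha}$ regularity of the domains is used.
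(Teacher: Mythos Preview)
Your proposal is essentially the paper's own proof: parametrize nearby domains by normal displacements, encode the overdetermination as a map between H\"older spaces, compute its partial Fr\'echet derivative at the concentric configuration via the shape derivative $u'$ (which solves the homogeneous transmission problem with $u' = -\partial_n u\,\varphi$ on $\partial B_1$, exactly as in the paper's \eqref{u'out}), diagonalize in spherical harmonics, and read off $s(k)$ as the value of $\sigma_c$ killing the $k$-th symbol; then invoke the implicit function theorem. The ``curvature/geometry term'' you anticipate is the paper's $\partial_{nn}v\,g$ (Theorem~\ref{Psi'=something}), and your worry about $k=1$ is harmless since $s(1)=1$ and $\sigma_c\neq 1$ by standing assumption.

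The one structural difference is the handling of the volume constraint. The paper does \emph{not} fold $|\Omega|=V_0$ into the map: it works from the outset with mean-zero perturbations $g\in\mathcal{Y}$ into mean-zero targets $\mathcal{Z}$ (the tangential Jacobian $J_\tau(g)$ in \eqref{psi is dare} is there precisely to force zero mean), applies the IFT to obtain $g(f)$, and then restores the volume a posteriori by a small homothety (Remark~\ref{about volume constraint}). Your variant with an extra scalar component is perfectly legitimate, but your stated spaces need adjusting: you cannot take the domain to be $\cC^{2+\alpha}_0(\partial B_1)$ and the target $\cC^{1+\alpha}_0(\partial B_1)\times\RR$, because on mean-zero $\varphi$ the linearized volume functional $\varphi\mapsto \int_{\partial B_1}\varphi$ vanishes identically. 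Take instead the full $\cC^{2+\alpha}(\partial B_1)$ as domain and let the $k=0$ mode of $\varphi$ feed the $\RR$ factor; then the block structure (modes $k\ge 1$ to $\cC^{1+\alpha}_0$, mode $k=0$ to $\RR$) makes the isomorphism transparent.
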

From Theorem \ref{mainthm1}, Problem $2$ has a solution near concentric balls except for specific values of the coefficients. 
Similar techniques have been used in the context of the control of free boundaries such as the control of the Bernoulli overdetermined problem \cite{KKL2014} and the control of a droplet shape via surface tension \cite{LW2015}.

Moreover, in order to solve the overdetermined problem \eqref{odp} numerically, we consider the Kohn--Vogelius functional introduced by the paper of Kohn and Vogelius \cite{KV1987} from the viewpoint of impedance computed tomography: 
\begin{equation}\label{def kv}
\mathcal{F}(\Omega)=\int_\Omega \sg |\gr v-\gr w|^2, 
\end{equation}
where $v$ is the solution of the Dirichlet problem \eqref{pb d} and $w$ is the solution of the following Neumann problem
\begin{equation}\label{pb n}
\begin{cases}
-\dv(\sg \gr w)=1 \quad\text{ in } \Om,\\
\pa_n w=c \quad \text{ on }\pa\Omega,\\
\int_{\partial\Omega} w= 0.
\end{cases}
\end{equation}
Functionals of the type \eqref{def kv} have been widely used not only in the field of impedance computed tomography but also in free boundary problems, see \cite{EH2012, BBPST2013}. 
Notice that, by definition \eqref{def kv}, the functional $\mathcal{F}$ is always nonnegative. Moreover, when $\mathcal{F}(\Omega)=0$ then $\gr v=\gr w$ in $\Omega$ and thus, by the normalization condition in \eqref{pb n}, $v=w$. In other words, the solutions of the outer problem coincide with the zeros of $\mathcal{F}$.
Therefore, we seek for the zeros of the Kohn--Vogelius functional in order to find the solutions of the outer problem. Notice that Theorem \ref{mainthm1} ensures that this procedure will yield a unique solution if the core $D$ is ``sufficiently close to a ball". 

Since $\mathcal{F}(\Omega)\ge 0$ for all admissible domains $\Omega$, we are going to look for those shapes that minimize $\mathcal{F}$ (and hopefully make it vanish). In other words, we consider the following minimization problem with volume constraint 
\begin{equation}\label{optipro1 intro}
\min_{\abs{\Omega} = V_{0}} \mathcal{F}(\Omega),   
\end{equation}
where the minimum is taken over all admissible domains $\Omega$ such that $\ol D\subset \Omega$.
This task will be performed numerically by a gradient descent algorithm. The steepest descent direction associated to $\mathcal{F}$ will be computed by means of the shape derivative of the Kohn--Vogelius functional $\mathcal{F}$.
\begin{theorem}\label{shape kv intro}
The Kohn--Vogelius type functional $\mathcal{F}$ defined by \eqref{def kv} is shape differentiable at $\Omega$. Moreover, for any smooth $h:\rn\to\rn$ whose support is compactly contained in $\rn\setminus \ol D$, we have  
\begin{equation*}
\mathcal{F}'(\Omega)(h)=\int_{\pa\Omega}\left\lbrace -|\gr w|^2+2(1+cH)w
-|\gr v|^2+2c^2\right\rbrace h\cdot n, 
\end{equation*}
where $H$ is the additive curvature defined by \eqref{mean curvature}. 
\end{theorem}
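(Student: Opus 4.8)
The plan is to compute the shape derivative of $\mathcal{F}$ by differentiating the identity $\mathcal{F}(\Omega)=\int_\Omega \sigma|\gr v - \gr w|^2$ under a perturbation of $\Omega$ along the vector field $h$. Since $\supp h \Subset \rn\setminus \ol D$, the interface $\pa D$ and the coefficient $\sigma$ are left untouched by the deformation, which is exactly what makes the two-phase structure disappear from the final formula (only boundary terms on $\pa\Omega$ survive). First I would invoke the standard theory of shape derivatives (e.g. Hadamard's formula together with the chain rule): writing $v' = v'(\Omega)(h)$ and $w' = w'(\Omega)(h)$ for the shape derivatives of the state functions, one gets
\begin{equation*}
\mathcal{F}'(\Omega)(h) = \int_\Omega \sigma\left( 2\gr v\cdot\gr v' - 2\gr v\cdot\gr w' - 2\gr w\cdot\gr v' + 2\gr w\cdot\gr w'\right) + \int_{\pa\Omega}\sigma|\gr v-\gr w|^2\, h\cdot n.
\end{equation*}
On $\pa\Omega$ we have $\sigma\equiv 1$, $v=0$, $\pa_n w = c$, so the last term equals $\int_{\pa\Omega}|\gr v - \gr w|^2 h\cdot n$; using $v=0$ on $\pa\Omega$ (so $\gr v = (\pa_n v) n$ there) this will combine with contributions coming from the boundary conditions satisfied by $v'$ and $w'$.

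The second, and technically central, step is to identify the boundary value problems solved by $v'$ and $w'$ and to use them to rewrite the volume integrals above as boundary integrals. Both $v'$ and $w'$ satisfy $-\dv(\sigma\gr\,\cdot\,)=0$ in $\Omega$ (since the right-hand side $1$ and the coefficient are unperturbed), with the transmission conditions across $\pa D$ unchanged; their boundary data on $\pa\Omega$ come from differentiating $v=0$ and $\pa_n w = c$ respectively, yielding the classical relations $v' = -(\pa_n v)\,h\cdot n$ on $\pa\Omega$ and a Neumann-type condition for $w'$ involving the mean curvature $H$, namely $\pa_n w' = -\dv_{\pa\Omega}\big((h\cdot n)\gr_{\pa\Omega} w\big) - (h\cdot n)\,\pa_n(\text{stuff})$, which after using $-\dv(\sigma\gr w)=1$ and $\pa_n w = c$ on $\pa\Omega$ simplifies to $\pa_n w' = (1 + cH)(h\cdot n)$ up to the sign convention of $H$ fixed by \eqref{mean curvature}. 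Then I would integrate by parts in the cross terms: for instance $\int_\Omega \sigma\gr w\cdot\gr v' = \int_{\pa\Omega} v'\,\pa_n w = \int_{\pa\Omega} c\,v' = -\int_{\pa\Omega} c(\pa_n v)(h\cdot n)$, using that $v'$ satisfies the transmission problem with zero source so all interior and interface terms cancel; similarly $\int_\Omega \sigma\gr v\cdot\gr w' = \int_{\pa\Omega} w\,\pa_n v'$... — here one instead tests the equation for $v$ against $w'$ to avoid needing $\pa_n v'$ explicitly, getting $\int_\Omega \sigma\gr v\cdot\gr w' = \int_\Omega w' - \int_{\pa\Omega}(\pa_n v)(0) = \int_\Omega w'$, and likewise rewrite $\int_\Omega\sigma\gr w\cdot\gr w'$ by testing the $w$-equation against $w'$.

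The bookkeeping of these integrations by parts — choosing, for each of the four bilinear terms, whether to move derivatives onto $v'/w'$ or to test a state equation against $v'/w'$ — is the part that needs care, because one must make sure every volume integral is eventually eliminated (turned into a constant times $\int_{\pa\Omega}$-quantity, or cancelled against its partner) and every interface integral over $\pa D$ cancels by the matching transmission conditions. After collecting terms, using $\pa_n v = c$ on $\pa\Omega$ (the overdetermination is \emph{not} assumed here, so actually one keeps $\pa_n v$ as $\pa_n v$ and only at the very end, if needed, notes $|\gr v|^2 = (\pa_n v)^2$ on $\pa\Omega$; but in fact the stated formula carries a term $2c^2$, suggesting that in the Dirichlet problem \eqref{pb d} one does use $v=0 \Rightarrow \gr v = (\pa_n v)n$ and then the cross terms produce $c\,\pa_n v$ while the $w$-terms produce the $2c^2$ and $2(1+cH)w$ pieces), one arrives at the claimed integrand $-|\gr w|^2 + 2(1+cH)w - |\gr v|^2 + 2c^2$.

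The main obstacle I anticipate is the correct derivation and handling of the boundary condition for $w'$ on $\pa\Omega$: differentiating the Neumann condition $\pa_n w = c$ along the flow of $h$ produces a tangential-divergence term and a curvature term, and getting the sign of $H$ consistent with the convention in \eqref{mean curvature} (and consistent with the appearance of $1+cH$ rather than $1-cH$) requires tracking the shape derivative of the unit normal carefully. Everything else — the transmission conditions across $\pa D$ being inert, the cancellation of interior integrals, the reduction of $\int_{\pa\Omega}\sigma|\gr v-\gr w|^2 h\cdot n$ using $v=0$ and $\pa_n w=c$ — is routine once that condition is in hand. I would also remark that the normalization $\int_{\pa\Omega} w = 0$ in \eqref{pb n} plays no role in the derivative formula (it only fixes the additive constant of $w$, and the formula is manifestly invariant under $w\mapsto w+\text{const}$ only after one checks $\int_{\pa\Omega}(1+cH)(h\cdot n)\,\text{const} $ integrates correctly — in fact $\int_{\pa\Omega} h\cdot n = \frac{d}{dt}|\Omega_t| = 0$ under the volume constraint, so the constant drops out).
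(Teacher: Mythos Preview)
Your overall strategy --- Hadamard's formula, then characterize $v'$ and $w'$, then integrate the four bilinear terms by parts --- is exactly the paper's route. But two concrete steps fail as you have written them.

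First, the Neumann condition for $w'$ does \emph{not} simplify to $\pa_n w' = (1+cH)\,h\cdot n$. The paper obtains $\pa_n w' = -\pa_{nn}w\,(h\cdot n) + \gr_\tau w\cdot\gr_\tau(h\cdot n) + c'$, which after substituting $\pa_{nn}w = -1 - cH - \Delta_\tau w$ (from the decomposition of $\Delta w=-1$ on $\pa\Omega$) becomes $(1+cH)(h\cdot n) + {\dv}_\tau\big((h\cdot n)\gr_\tau w\big) + c'$. The tangential-divergence piece does not vanish pointwise; when you compute $\int_\Omega\sigma\gr w\cdot\gr w' = \int_{\pa\Omega}w\,\pa_n w'$ and integrate it by parts tangentially, it produces exactly $-\int_{\pa\Omega}|\gr_\tau w|^2\,h\cdot n$, i.e.\ the tangential part of $-|\gr w|^2$ in the final integrand. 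Second, your identity $\int_\Omega \sigma\gr w\cdot\gr v' = \int_{\pa\Omega} v'\,\pa_n w$ drops the source: since $-\dv(\sigma\gr w)=1$, integration by parts gives an extra term $\int_\Omega v'$, which the paper evaluates by differentiating the energy identity $\int_{\Omega_t} v_t = \int_{\Omega_t}\sigma|\gr v_t|^2$ to get $\int_\Omega v' = \int_{\pa\Omega}|\pa_n v|^2\,h\cdot n$. Relatedly, the two terms $\int_\Omega\sigma\gr v\cdot\gr v'$ and $\int_\Omega\sigma\gr v\cdot\gr w'$ vanish outright by testing $v$ (which is zero on $\pa\Omega$) in the \emph{homogeneous} equations for $v'$ and $w'$; there is no need to bring in $\int_\Omega w'$, and your computation of that cross term is confused (the boundary term is $\int_{\pa\Omega}w'\,\pa_n v$, not $\int_{\pa\Omega}(\pa_n v)\cdot 0$).

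One smaller correction: the theorem imposes no volume constraint on $h$, so $\int_{\pa\Omega}h\cdot n\neq 0$ in general and your argument for why the normalization of $w$ is irrelevant does not apply. Your intuition that $\mathcal{F}'$ should not depend on the additive constant of $w$ is correct, but the mechanism is that the $c'$ term above is killed by $\int_{\pa\Omega}w=0$; without that normalization the stated formula holds with $w$ replaced by $w - |\pa\Omega|^{-1}\int_{\pa\Omega}w$.
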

Combining the result of Theorem \ref{shape kv intro} with the augmented Lagrangian method based on \cite{NW2006, PT2018}, we solve the minimization problem \eqref{optipro1 intro} numerically by a gradient descent method.

This paper is organized as follows. In Section \ref{preonshape}, we give some notations and preliminaries on shape and tangential calculus for shape derivative. In Section \ref{proof of existence and uniqueness}, we prove Theorem \ref{mainthm1} by using shape derivatives and the implicit function theorem for Banach spaces. In Section \ref{shape derivative of KV}, we compute the shape derivative of Kohn--Vogelius functional. In Section \ref{gradient descent method with volume constraint}, we explain the augmented Lagrangian method and our algorithm for the minimization problem \eqref{optipro1 intro}. In Section \ref{Numerical results}, we show the numerical results based on our algorithm introduced in Section \ref{gradient descent method with volume constraint}. In Section \ref{pro and con}, we state some open problems and conjectures.  

\section{Preliminaries on shape and tangential calculus}\label{preonshape}
In this section we will introduce the concept of shape derivatives and related tools. The topic is too old and deep to be treated exhaustively in this paper; thus we refer the interested reader to the monographs \cite{SG, HP2005, SZ1992}.
\subsection{Shape derivatives}
Let us first introduce some basic notation. Let $\om\subset\rn$ be a smooth domain at which we will compute the derivative of a shape functional $J$ (we will, therefore, require $J(\widetilde{\om})$ to be defined at least for all domains $\widetilde{\om}$ ``sufficiently close'' to the reference domain $\om$).
Let $h:\rn\to\rn$ be a smooth vector field. For $t>0$ small enough the perturbation of the identity ${\rm Id}+th:\rn\to\rn$ is a diffeomorphism. Let $\omega_t=({\rm Id}+t h)(\om)$ denote the deformed domain. 
The shape derivative of $J$ at $\om$ with respect to the perturbation field $h$ is then defined as
\begin{equation*}
J'(\om)(h)=\lim_{t\to 0} \frac{J(\om_t)-J(\om)}{t}.
\end{equation*}
Of course, the definition above can be extended to functionals that take several domains as input as well.

The concept of shape derivative can be applied to shape functionals that take values in a general Banach space too. 
A fairly common example is given by a smoothly varying family of sufficiently smooth real-valued functions $f_t$ defined on the set $\om_t=({\rm Id} +t h)(\om)$ (in many practical applications $f_t$ is the solution to some boundary value problem defined on the perturbed domain $\om_t$). Since each $f_t$ lives in a different domain $\om_t$, the shape derivative $f'$ has to be defined in an indirect way (see \cite{SZ1992}), that is
\begin{equation}\label{f' by material deri}
f'=\dot f - \gr f\cdot h,     
\end{equation}
where $\dot f$ is the \emph{material derivative} of $f_t$, defined as
\begin{equation*}
\dot f= \restr{\frac{d}{dt}}{t=0} f_t\circ \left({\rm Id} + th\right).    
\end{equation*}

In what follows we will give the classical Hadamard formulas for computing the derivative of an integral over a domain $\om$, or a surface integral over the boundary $\pa\om$, whose integrand also depends on $\om$. These formulas will be our primary tool in computing shape derivatives in this paper (we refer to \cite[Theorem 5.2.2, p.194 and Proposition 5.4.4, p.215]{HP2005} for the details).

\begin{lemma}[Hadamard formulas]\label{hadamard formula}
Let $\om$ be a smooth domain of $\rn$ with outer unit normal $n$. For a smooth perturbation field $h:\rn\to\rn$, set $\om_t=({\rm Id}+th)(\om)$. For small $t>0$, let $f_t$ and $g_t$ be smooth real valued functions defined on $\om_t$ and $\pa \om_t$ respectively. Suppose that $f_t$ and $g_t$ are shape differentiable in the sense of \eqref{f' by material deri} with shape derivatives $f'$ and $g'$. Set $J_1(t)=\int_{\om_t} f_t$ and $J_2(t)=\int_{\pa\om_t} g_t$. Then $J_1$ and $J_2$ are differentiable at $t=0$ and the following holds:
\begin{equation*}
J_1'(0) = \int_{\om} f' + \int_{\pa\om} f\,h\cdot n, \quad J_2'(0)= \int_{\pa\om} g' + \int_{\pa\om} \left( H g + \pa_n g\right) \,h\cdot n,
\end{equation*}
where $H$ is the additive curvature defined by \eqref{mean curvature}.
\end{lemma}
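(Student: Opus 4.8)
The plan is to reduce both identities to a differentiation-under-the-integral computation on the \emph{fixed} reference domain $\om$, by pulling back the integrals over $\om_t$ and $\pa\om_t$ through the diffeomorphism $\Phi_t:={\rm Id}+th$. The two ingredients I will need are Jacobi's formula for the derivative of a determinant and the elementary relation between the shape (Eulerian) derivative $f'$ defined in \eqref{f'=def} and the material (Lagrangian) derivative, namely $\frac{d}{dt}\big|_{t=0} f_t(\Phi_t(x)) = f'(x) + \gr f(x)\cdot h(x)$, which follows from the chain rule.

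For $J_1$, I would change variables $y=\Phi_t(x)$ to write $J_1(t)=\int_\om f_t(\Phi_t(x))\,J_t(x)\,dx$, where $J_t=\det(I+t\,Dh)$ is the Jacobian of $\Phi_t$ and $Dh$ denotes the Jacobian matrix of $h$. Since $J_0=1$ and, by Jacobi's formula, $\frac{d}{dt}\big|_{t=0}J_t=\tr(Dh)=\dv h$, differentiating under the integral at $t=0$ gives $J_1'(0)=\int_\om\big(f'+\gr f\cdot h\big)+\int_\om f\,\dv h$. Recognising $\gr f\cdot h+f\,\dv h=\dv(f\,h)$ and applying the divergence theorem on $\om$ converts the last two terms into $\int_{\pa\om}f\,h\cdot n$, which is the first formula.

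For $J_2$ the same strategy applies, but now the area element must be transported: pulling back to $\pa\om$ one has $J_2(t)=\int_{\pa\om}g_t(\Phi_t(x))\,\omega(t)(x)\,dS$, where $\omega(t)=J_t\,|(I+t\,Dh)^{-T}n|$ is the tangential Jacobian. A short computation (Jacobi's formula for $J_t$ together with the derivative of $t\mapsto(I+t\,Dh)^{-T}$ at $t=0$) yields $\omega(0)=1$ and $\omega'(0)=\dv h-(Dh\,n)\cdot n=\dv_\tau h$, the tangential divergence of $h$. Differentiating under the integral then gives $J_2'(0)=\int_{\pa\om}\big(g'+\gr g\cdot h\big)+\int_{\pa\om}g\,\dv_\tau h$. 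To reach the claimed form I would split $h=(h\cdot n)\,n+h_\tau$ and $\gr g=(\pa_n g)\,n+\gr_\tau g$ into normal and tangential parts and invoke the tangential Green formula on the closed hypersurface $\pa\om$, namely $\int_{\pa\om}g\,\dv_\tau h=-\int_{\pa\om}\gr_\tau g\cdot h_\tau+\int_{\pa\om}H\,g\,(h\cdot n)$, in which the additive mean curvature $H=\dv_\tau n$ appears precisely because $\pa\om$ has no boundary. The tangential contributions $\gr_\tau g\cdot h_\tau$ then cancel, leaving $\int_{\pa\om}(\pa_n g+Hg)\,h\cdot n$, as required.

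The main obstacle is entirely contained in the surface integral $J_2$: both the computation $\omega'(0)=\dv_\tau h$ and the tangential integration by parts are genuinely differential-geometric facts that force one to separate $h$ and $\gr g$ into their normal and tangential parts and to use that $\pa\om$ is closed, so that the pure tangential divergence integrates to zero and only the curvature term $H\,g\,(h\cdot n)$ survives. By contrast, the volume formula $J_1'(0)$ is routine once the change of variables and Jacobi's formula are in place. Throughout I would use the smoothness of $\om$, $h$, $f_t$ and $g_t$ to justify differentiating under the integral sign and to interpret $\gr g$ on $\pa\om$ as the gradient of a smooth extension of $g$; note that only $\pa_n g$ and $\gr_\tau g$ enter, and the latter cancels, so the result is consistent with $g$ being given on the moving boundary.
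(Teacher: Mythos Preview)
Your argument is correct and is precisely the standard proof of the Hadamard formulas: pull back to the fixed domain, differentiate the Jacobian (resp.\ tangential Jacobian), use the material/shape derivative relation, and then integrate by parts (resp.\ tangentially integrate by parts, producing the curvature term $H$). Note, however, that the paper does \emph{not} prove this lemma at all; it simply states the formulas and cites \cite[Theorem 5.2.2 and Proposition 5.4.18]{HP2005} for the proof, so there is no in-paper argument to compare against. Your sketch is essentially what one finds in that reference.

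One small remark: the tangential integration-by-parts identity you invoke, $\int_{\pa\om}g\,\dv_\tau h=-\int_{\pa\om}\gr_\tau g\cdot h_\tau+\int_{\pa\om}Hg\,(h\cdot n)$, is slightly more general than the version \eqref{tangential integration by parts} actually stated in the paper (which, as written, omits the curvature term and is intended for tangential vector fields). This is not an error on your part---the full identity with the $H$ term is exactly what is needed here and is standard---but if you want to stay strictly within the paper's toolbox you could alternatively observe that $\dv_\tau h=\dv_\tau h_\tau+\dv_\tau\big((h\cdot n)n\big)=\dv_\tau h_\tau+H\,(h\cdot n)+\gr_\tau(h\cdot n)\cdot n=\dv_\tau h_\tau+H\,(h\cdot n)$, and then apply \eqref{tangential integration by parts} to the tangential part $h_\tau$ only.
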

\subsection{Tangential calculus}
In this subsection we will briefly introduce the basic differential operators from tangential calculus and their fundamental properties. 
In what follows $\omega$ will be a sufficiently smooth domain of $\rn$ and $n$ will denote its outward unit normal. Furthermore, until the end of this subsection the letters $f$ and $g$ will be used to denote $\cC^{2,\al}$ functions defined on $\pa\om$ that take values in $\RR$ and $\rn$ respectively. We define
\begin{eqnarray*}
\gr_\tau f= \gr \tilde{f}- \left(\gr \tilde{f}\cdot n \right)n\quad \text{(tangential gradient)},\\
{\dv}_\tau g = \dv \tilde{g}- \left( \gr \tilde{g}\, n \right)\cdot n \quad \text{(tangential divergence),}
\end{eqnarray*}
where $\tilde{f}$ and $\tilde{g}$ are $\cC^{2,\al}$ extensions of $f$ and $g$ to a neighborhood of $\pa\omega$ and $\gr\tilde{g}$ is the Jacobian matrix of $\tilde{g}$. It is well known that the definitions above do not actually depend on the choice of extensions.
The following tangential version of integration by parts holds true for all smooth $f$, $g$ and $\om$ (see \cite[(5.64), p.221]{HP2005}):
\begin{equation}\label{tangential integration by parts}
\int_{\pa\omega} \gr_\tau f\cdot g = -\int_{\pa\omega} f\, {\dv}_\tau g.
\end{equation}

We introduce the following definition for the additive curvature of $\pa\omega$:
\begin{equation}\label{mean curvature}
H={\dv}_\tau n\quad \text{ on }\pa\omega.
\end{equation}
Notice that the definition above coincides with the sum of the principal curvatures of $\pa\omega$. In particular $H$ is positive whenever $\omega$ is strictly convex and $H\equiv (N-1)/R$ if $\omega$ is a ball of radius $R$. 

Finally, we introduce the tangential analogue of the Laplacian (also known as Laplace--Beltrami operator):
\begin{equation}\label{laplace beltrami op}
\Delta_\tau f = {\dv}_\tau \gr_\tau f\quad\text{ on }\pa\omega.
\end{equation}
Now, we recall the following classical decomposition formula for the Laplace operator that holds for every smooth function $\varphi$ defined on $\ol\omega$ (see \cite[Proposition 5.4.12]{HP2005}):
\begin{equation}\label{decomp lapl}
\Delta \varphi = \pa_{nn}\varphi + H \pa_n \varphi + \Delta_\tau \varphi \quad \text{ on }\pa\om.
\end{equation}

\section{Local existence and uniqueness for the outer problem near concentric balls}\label{proof of existence and uniqueness}

It is clear that the overdetermined problem \eqref{odp} is solvable if $(D,\Omega)$ are concentric balls. Indeed, if $(D,\Omega)$ are concentric balls, then the unique solution of problem \eqref{pb d} will be radial, and thus it will automatically satisfy the two boundary conditions of \eqref{odp}, independently of $\sg_c$. 
The converse is not true. Here, we will show the existence of infinitely many nontrivial solutions $(D,\Omega)$ of the outer problem (Problem 2) by means of a perturbation argument based on the following version of the implicit function theorem for Banach spaces (see \cite[Theorem 2.7.2, pp.34--36]{Ni2001} for a proof). 

\begin{theorem}[Implicit function theorem]\label{ift}
Suppose that $\mathcal{X}$, $\mathcal{Y}$ and $\mathcal{Z}$ are three Banach spaces, $\mathcal{U}$ is an open subset of $\mathcal{X}\times\mathcal{Y}$, $(x_0,y_0)\in \mathcal{U}$, and $\Psi:\mathcal{U}\to \mathcal{Z}$ is a Fr\'echet differentiable mapping such that $\Psi(x_0,y_0)=0$. Assume that the partial derivative $\pa_y \Psi(x_0,y_0)$ with respect to the variable $y$ at $(x_0,y_0)$ is a bounded invertible linear transformation from $\mathcal{Y}$ to $\mathcal{Z}$. 
Then there exists a neighborhood $\mathcal{U}_0$ of $x_0$ in $\mathcal{X}$ and a unique continuous function $g:\mathcal{U}_0\to \mathcal{Y}$ such that $g(x_0)=y_0$, $(x, g(x))\in \mathcal{U}$ and $\Psi(x, g(x))=0$ for all $x\in\mathcal{U}$.
Moreover, the function $g$ is Fr\'echet differentiable in $\mathcal{U}_0$ and its Fr\'echet differential $g'$ can be written as 
\begin{equation}\label{second part}
 g^\prime (x) = - \pa_y \Psi(x, g(x))^{-1}\, \pa_x \Psi(x, g(x)) \quad \textrm{ for } x\in \mathcal{U}_0.
\end{equation}
\end{theorem}
\subsection{Preliminaries}
We now introduce the functional setting for the proof of Theorem \ref{mainthm1}. Let $D$ and $\Om$ be concentric balls of radius $R$ and $1$ respectively ($0<R<1$), whose common center can be thought to be at the origin. 
Fix $\alpha\in (0,1)$ and let $h\in \mathcal{C}^{2,\alpha}(\rn,\rn)$ be a sufficiently small perturbation field such the map ${\rm Id} + h$ is a diffeomorphism from $\rn$ to $\rn$, and such that
\begin{equation}\label{h=fn h=gn}
h= f\, n \quad \textrm{ on }\pa D\quad\textrm{ and } \quad h = g\, n \quad\textrm{ on }\pa\Omega,
\end{equation}
where $f$ and $g$ are given functions of class $\mathcal{C}^{2,\al}$ on $\pa D$ and $\pa \Omega$ respectively.
Next we define the perturbed domains
\begin{equation*}
\Omega_g=({\rm Id}+h)(\Omega) \quad \textrm{ and }\quad D_f = ({\rm Id}+ h) (D). 
\end{equation*}
We will also require $f$ and $g$ to be sufficiently small, so that the inclusion $\ol{D_f}\subset \Om_g$ holds true.
We consider the following Banach spaces (equipped with the standard norms):
\begin{eqnarray}\nonumber
& \mathcal{X}=\setbld{f\in C^{2,\al}(\pa D)}{\int_{\pa D} f\, =0},\quad 
\mathcal{Y}=\setbld{g\in C^{2,\al}(\pa\Om)}{\int_{\pa\Om} g\, =0}, \\
&\mathcal{Z}=\setbld{\psi\in C^{1,\al}(\pa\Om)}{\int_{\pa\Om} \psi\,  =0}.\nonumber
\end{eqnarray}
As done in \cite[Chapter 6]{camasa}, we will apply Theorem \ref{ift} to the mapping $\Psi: \mathcal{X}\times\mathcal{Y}\to\mathcal{Z}$, defined by:
\begin{equation}\label{psi is dare}
\Psi(f,g) = \left\lbrace \pa_{n_g} v_{f,g} - c_g  \right\rbrace J_\tau (g) \quad \textrm{for } (f,g)\in \mathcal{X}\times\mathcal{Y}.
\end{equation}
Here $v_{f,g}$ denotes the solution $v(D_f,\Om_g)$ to the Dirichlet problem \eqref{pb d} corresponding to the deformed configuration $(D_f,\Omega_g)$, similarly 
$n_g$ denotes the outer normal of $\Omega_g$. Moreover, by a slight abuse of notation, the notation $\pa_{n_g} v_{f,g}$ is used to represent the function of value
\begin{equation*}
\gr v_{f,g}\left(x+g(x)\, n(x) \right)\cdot n_g(x+g(x)\,n(x))\quad \textrm{ at any } x\in\pa\Omega.
\end{equation*}
Finally, the constant $c_g$ is just $c(\Omega_g)= -|\Omega_g|/|\pa \Omega_g|$ and the term $J_\tau(g)>0$ is the tangential Jacobian associated to the transformation $x\mapsto x+g(x)\,n(x)$ defined as 
\begin{equation*}
    J_{\tau}(g) = \det\left( {\rm Id} + Dh \right)\norm{({\rm Id} + Dh)^{-T}n}, 
\end{equation*}
where $h$ is the extension of $g$ defined by \eqref{h=fn h=gn} and $\norm{\cdot}$ is the Euclidean norm (see \cite[Definition 5.4.2, p.213]{HP2005}). By definition, we have that $\Psi(f,g)=0$ if and only if the pair $(D_f,\Om_g)$ is a solution of the overdetermined problem \eqref{odp}. In particular we know that $\Psi(0,0)$ vanishes because, since $(D,\Om)$ are concentric balls, $\pa_n v$ is constant on $\pa\Om$.  
Finally, notice that the term $J_\tau(g)>0$ has been added for technical reasons: namely to ensure that $\Psi(f,g)$ has vanishing integral over $\pa\Omega$ (in other words, it belongs to $\mathcal{Z}$) for all $(f,g)\in \mathcal{X}\times\mathcal{Y}$.

\subsection{The derivative of $\Psi$}
The map $\Psi$ is Fr\'echet differentiable in a neighborhood of $(0,0)\in \mathcal{X}\times\mathcal{Y}$. This can be proved in a standard way by following the ideas of \cite[Theorem 5.3.2, p.203]{HP2005} with the help of Schauder's theory for elliptic operators with piecewise constant coefficients. We refer to \cite[Lemma 5.1]{cavaphd} for a complete proof.
As a consequence, for $f\in \mathcal{X}$ and $g\in\mathcal{Y}$, the partial Fr\'echet derivatives $\pa_x \Psi(0,0)(f)$ and $\pa_y \Psi(0,0)(g)$ coincide with the following G\^ateaux derivatives:
\begin{equation*}
\pa_x \Psi(0,0)(f) = \restr{\frac{d}{dt}}{t=0} \Psi(tf,0) \quad\textrm{ and }\quad \pa_y \Psi(0,0)(g) = \restr{\frac{d}{dt}}{t=0} \Psi(0,tg). 
\end{equation*}
Let $v_t=v(D_{tf}, \Omega_{tg})$. In what follows we will employ the use of the following characterization of the shape derivative $v'$ (see \cite[Proposition 3.1]{cava2018}).

\begin{lemma}\label{lemma v' pm}
For every $(f,g)\in \mathcal{X}\times\mathcal{Y}$, the map $t\mapsto v_t$ is shape differentiable, with shape derivative $v'$. Moreover $v'$ can be decomposed as the sum of $v'=v'_-+v'_+$, where $v'_\pm$ are the solutions to the following boundary value problems.

\noindent\begin{minipage}{.5\linewidth}
\begin{equation}\label{u'in}
\begin{cases}\De v'_{-}=0 \quad\mbox{ in } D\cup (\Om\setminus\ol{D}),\\
[\sg\, \pa_n v'_{-}]=0 \quad\mbox{ on }\pa D,\\
[v'_{-}]=-[\pa_n v]f\quad\mbox{ on }\pa D,\\
v'_{-}=0 \quad \mbox{ on }\pa\Om.
\end{cases}
\end{equation}
\end{minipage}%
\begin{minipage}{.5\linewidth}
\begin{equation}\label{u'out}
\begin{cases}
\De v'_{+}=0 \quad \mbox{ in } D\cup (\Om\setminus\ol{D}),\\
[\sg\, \pa_n v'_{+}]=0 \quad\mbox{ on }\pa D,\\
[v'_{+}]=0\quad \mbox{ on }\pa D,\\
v'_{+}= -\pa_n v \,g \quad \mbox{ on }\pa\Om.
\end{cases}
\end{equation}
\end{minipage}
\vspace{4mm}

\noindent In the above, we used square brackets to denote the jump of a function across the interface $\pa D$. Notice that $v'_\pm$ defined above are the shape derivatives of the maps $t\mapsto 
v(D,\Omega_{tg})$ and $t\mapsto v(D_{tf},\Omega)$ respectively.
\end{lemma}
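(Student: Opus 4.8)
The plan is to first recall why the shape derivative $v'$ exists, then to identify the transmission problem it satisfies, and finally to split that problem by linearity. For the existence of $v'$, I would transport $v_t=v(D_{tf},\Om_{tg})$ back to the fixed configuration $(D,\Om)$ through the diffeomorphism ${\rm Id}+th$; it then becomes the solution of an elliptic transmission problem on $(D,\Om)$ whose coefficients (constant in each of the fixed phases $D$ and $\Om\setminus\ol D$, but globally discontinuous across $\pa D$) and right-hand side depend smoothly on $t$. Differentiability of $t\mapsto v_t$ in the $\cC^{1+\alpha}$-topology then follows by applying the implicit function theorem to the transported equation, exactly as in \cite[Theorem 5.3.2]{HP2005}, the only extra ingredient being the Schauder theory for elliptic operators with piecewise constant coefficients; this is the argument of \cite[Lemma 5.1]{cavaphd} and \cite[Proposition 3.1]{cava2018}, which I would invoke.

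Next I would pin down the transmission problem for $v'$ by differentiating at $t=0$ the relations defining $v_t$ (either directly, or through the transported weak formulation). In the interior of each phase a fixed point stays in that phase for $t$ small, where $-\sg\De v_t=1$ with $\sg$ locally constant, so $\De v'=0$ in $D\cup(\Om\setminus\ol D)$. On the moving interface $\pa D_{tf}=({\rm Id}+tfn)(\pa D)$, differentiating the continuity $[v_t]=0$ along the flow, together with the fact that $v$ has no tangential jump, yields $[v']=-[\pa_n v]\,f$ on $\pa D$; differentiating the flux continuity $[\sg\,\pa_{n_t}v_t]=0$ and using the tangential calculus of Section \ref{preonshape} gives $[\sg\,\pa_n v']=0$ on $\pa D$, the curvature contribution $Hf\,[\sg\,\pa_n v]$ and the bulk contribution $f\,[\dv(\sg\gr v)]$ both vanishing (the former since $[\sg\,\pa_n v]=0$ on $\pa D$, the latter since $\dv(\sg\gr v)=-1$ on either side of $\pa D$). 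Finally, differentiating $v_t=0$ on $\pa\Om_{tg}=({\rm Id}+tgn)(\pa\Om)$ and using $\gr v=(\pa_n v)\,n$ on $\pa\Om$ gives $v'=-\pa_n v\,g$ on $\pa\Om$. Altogether $v'$ solves
\begin{equation*}
\begin{cases}
\De v'=0 &\text{in } D\cup(\Om\setminus\ol D),\\
[\sg\,\pa_n v']=0 &\text{on }\pa D,\\
[v']=-[\pa_n v]\,f &\text{on }\pa D,\\
v'=-\pa_n v\,g &\text{on }\pa\Om.
\end{cases}
\end{equation*}

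This problem depends linearly on the data $\bigl(-[\pa_n v]\,f,\,-\pa_n v\,g\bigr)$, and for any such data of class $\cC^{1+\alpha}$ it is uniquely solvable: existence and uniqueness in $H^1$ follow from the Lax--Milgram theorem applied to the weak formulation of the transmission problem (after lifting the prescribed jump across $\pa D$), and the claimed regularity from the corresponding Schauder estimates. Hence I define $v'_-$ and $v'_+$ to be the solutions of \eqref{u'in} and \eqref{u'out}; then $v'_-+v'_+$ solves the displayed problem, so $v'=v'_-+v'_+$ by uniqueness. Moreover, since the transported map $(f,g)\mapsto v_{f,g}$ is Fr\'echet differentiable, the shape derivative along $h$ equals the sum of the two partial shape derivatives obtained by perturbing a single boundary at a time; taking $g=0$ removes the data on $\pa\Om$ and leaves exactly \eqref{u'in}, which identifies $v'_-$ with the shape derivative of $t\mapsto v(D_{tf},\Om)$, while $f=0$ leaves \eqref{u'out} and identifies $v'_+$ with that of $t\mapsto v(D,\Om_{tg})$.

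The main obstacle is the first step together with the flux computation in the second: rigorously justifying the passage to the limit in the difference quotients --- that is, that $v'$ exists in $\cC^{1+\alpha}$ and that the interface conditions differentiate as stated --- requires Schauder theory for the discontinuous-coefficient operator across a \emph{moving} interface, and the flux-continuity condition in particular needs a careful tangential-calculus manipulation. Once these are settled, the linear decomposition and the identification of $v'_{\pm}$ with the one-variable shape derivatives are routine.
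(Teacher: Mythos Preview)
Your proposal is correct and follows the standard derivation; in fact you invoke the very reference \cite[Proposition 3.1]{cava2018} to which the paper defers, since the paper does not prove this lemma at all but simply cites that proposition. Your sketch --- pull back to the fixed domain, use the implicit function theorem together with Schauder theory for transmission problems to obtain differentiability, then differentiate the interface and boundary relations and split linearly --- is precisely the argument behind that cited result, so there is nothing to compare.
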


\begin{theorem}\label{Psi'=something}
The Fr\'echet derivative $\Psi'(0,0)$ defines a mapping from $\mathcal{X}\times\mathcal{Y}$ to $\mathcal{Z}$ by the formula
\begin{equation*}
\Psi'(0,0)(f,g) = \pa_n v' + \pa_{nn}v \, g,
\end{equation*}
where $\pa_{nn}v = \left(D^2 v\; n\right) \cdot n$.
In particular, following the notation of Lemma \ref{lemma v' pm}, we have the following expression for the partial Fr\'echet derivatives as well:
\begin{align}
\pa_x \Psi(0,0)(f) &= \pa_n v'_-, \label{deri f} \\
\pa_y \Psi(0,0)(g) &= \pa_n v'_+ + \pa_{nn} v\, g. \label{deri g} 
\end{align}
\end{theorem}
\begin{proof}
Fix $(f,g)\in \mathcal{X}\times\mathcal{Y}$. As before, put $v_t=v(D_{tf},\Omega_{tg})$. Similarly, let $c(t)=c(\Om_{tg})$, $n_t=n_{tg}$ and $J_\tau(t)=J_\tau(tg)$. Since $\Psi$ is Fr\'echet differentiable, we can compute its Fr\'echet derivative as a G\^ateaux derivative as follows:
\begin{equation*}
\Psi'(0,0)(f,g)=\restr{\frac{d}{dt}}{t=0}\Psi(tf,tg)= \restr{\frac{d}{dt}}{t=0} \Big\lbrace \left( \gr v_t\cdot n_t\right)\circ \left( {\rm Id}+tgn \right) - c(t)  \Big\rbrace J_\tau (t).
\end{equation*}
Since $J_\tau (0)\equiv 1$ and $\pa_n v\equiv c(0)$ on $\pa\Omega$, we have
\begin{equation}\label{Psi'=}
\Psi'(0,0)(f,g)= \restr{\frac{d}{dt}}{t=0} \Big\{ \left(\gr v_t\cdot n_t \right)\circ \left({\rm Id}+ tgn\right) - c(t) \Big\}.  
\end{equation}
By a standard calculation with Lemma \ref{hadamard formula} at hand we get
\begin{equation*}
c'(0)= -\restr{\frac{d}{dt}}{t=0} \frac{|\Omega_{tg}|}{|\pa \Omega_{tg}|}= -\frac{1}{|\pa \Omega|} \int_{\pa\Omega}(1+cH) g =0,
\end{equation*}
where in the last equality we used the fact that $H$ is constant on $\pa \Omega$ and that $g$ has vanishing integral by hypothesis.
By Hopf lemma and the boundary condition in \eqref{pb d}, it is clear that $\gr v_t \cdot n_t = -|\gr v_t|<0$ on $\pa\Omega$. Therefore, 
\begin{eqnarray}\nonumber
\Psi'(0,0)(f,g)= -\restr{\frac{d}{dt}}{t=0} |\gr v_t|\circ \left({\rm Id}+ tgn\right) = \\
-\frac{1}{|\gr v|} \Big(\gr v\cdot \gr v' + (D^2 v\, \gr v)\cdot gn  \Big) 
= \pa_n v' +\pa_{nn} v\, g.
\end{eqnarray}
The representation formulas for the partial Fr\'echet derivatives $\pa_x \Psi(0,0)$ and $\pa_y \Psi(0,0)$ follow immediately.
\end{proof}

\subsection{Applying the implicit function theorem}

In order to apply Theorem \ref{ift}, we will need the following explicit representation for the shape derivatives $v'_\pm$ by means of their spherical harmonic expansion. 
Let $\{Y_{k,i}\}_{k,i}$ ($k\in \{0,1,\dots\}$, $i\in\{1,2,\dots, d_k\}$) denote a maximal family of linearly independent solutions to the eigenvalue problem
\begin{equation*}
-\De_{\tau} Y_{k,i}=\lambda_k Y_{k,i} \quad\textrm{ on }\SS^{N-1},
\end{equation*}
with $k$-th eigenvalue $\lambda_k=k(N+k+2)$ of multiplicity $d_k$ and normalized in such a way that $\norm{Y_{k,i}}_{L^2(\SS^{N-1})}=1$. Here $\De_\tau$ stands for the Laplace--Beltrami operator on the unit sphere $\SS^{N-1}$ defined as \eqref{laplace beltrami op}. Such functions, usually referred to as \emph{spherical harmonics} in the literature, form a complete orthonormal system of $L^2(\SS^{N-1})$.
This fundamental property of spherical harmonics turns out to be very useful in computing the solutions of PDE's in radially symmetric domains by applying the method of separation of variables.
We refer to \cite[Proposition 3.2]{cava2018} for a proof of the following result. 
\begin{lemma}\label{lemma v' expansion}
Assume that, for some real coefficients $\alpha_{k,i}^\pm$, the following expansions hold true  for all $\theta\in\SS^{N-1}$:
\begin{equation}\label{h_in h_out exp}
f(R\theta)=\sum_{k=1}^\infty\sum_{i=1}^{d_k}\al_{k,i}^- Y_{k,i}(\theta), \quad 
g(\theta)=\sum_{k=1}^\infty\sum_{i=1}^{d_k}\al_{k,i}^+ Y_{k,i}(\theta).
\end{equation} 
Then, the functions $v'_\pm$ defined in Lemma \ref{lemma v' pm} admit the following explicit expression for $\theta\in\mathbb{S}^{N-1}$:
\begin{equation}\label{u'pm li seme}
v'_\pm(r\theta)=\begin{cases}\displaystyle
\sum_{k=1}^\infty\sum_{i=1}^{d_k}\al_{k,i}^\pm B_k^\pm r^k Y_{k,i}(\theta) \quad &\text{ for } r\in[0,R],\\
\displaystyle
\sum_{k=1}^\infty\sum_{i=1}^{d_k}\al_{k,i}^\pm\left(C_k^\pm r^{2-N-k}+D_k^\pm r^k\right) Y_{k,i}(\theta)\quad&\text{ for } r\in(R,1],
\end{cases}
\end{equation}
where the constants $B_k^\pm$, $C_k^\pm$ and $D_k^\pm$ are defined as follows
\begin{equation*}
\begin{aligned}
&B_k^- = \frac{1-\sg_c}{\sg_c}R^{-k+1}\left( (N-2+k)R^{2-N-2k} +k\right)/F, \quad 
C_k^- = (\sg_c-1)k R^{-k+1}/F, \quad D_k^-=-C_k^-, \\
&B_k^+ = (N-2+2k)R^{2-N-2k}/F, \quad C_k^+ = (1-\sg_c)k/F,\quad D_k^+ = (N-2+k+k\sg_c)R^{2-N-2k}/F,
\end{aligned}
\end{equation*}
and the common denominator $F= N(N-2+k+k\sg_c)R^{2-N-2k}+k N (1-\sg_c)$.
\end{lemma}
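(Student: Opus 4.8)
The plan is to solve the two transmission problems \eqref{u'in} and \eqref{u'out} explicitly by separation of variables in the two concentric balls. Since the data $f$ and $g$ are expanded in spherical harmonics as in \eqref{h_in h_out exp}, and the problems are linear, it suffices to treat a single mode: fix $k\geq 1$, $i\in\{1,\dots,d_k\}$, and look for $v'_\pm$ of the form $a(r)Y_{k,i}(\theta)$ in each of the two annular regions $\{0\leq r<R\}$ and $\{R<r\leq 1\}$. The equation $\Delta v'_\pm=0$ forces $a(r)$ to be a linear combination of the two homogeneous solutions $r^k$ and $r^{2-N-k}$ of the associated Euler ODE; regularity at the origin kills the $r^{2-N-k}$ term inside $D$, so the ansatz \eqref{u'pm li seme} is the general harmonic function with the right symmetry. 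What remains is to pin down the four constants (two inside, two outside, before factoring out $\alpha_{k,i}^\pm$) from the four scalar conditions in each of \eqref{u'in}, \eqref{u'out}.

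Next I would convert the interface and boundary conditions into linear equations for these constants. For this I need $\partial_n v$ and $[\partial_n v]$ on $\partial D$, which come from the unperturbed solution $v=v(B_R,B_1)$ of \eqref{pb d}: this is the explicit radial function solving $-\mathrm{div}(\sigma\nabla v)=1$ with $v=0$ on $\partial B_1$, so $v(r)$ is a piecewise quadratic-plus-log/power profile, and in particular $\partial_n v$ on $\partial\Omega$ equals the constant $c=-|B_1|/|\partial B_1|=-1/N$, while the jump $[\partial_n v]$ on $\partial B_R$ is computed directly from the flux balance. Plugging the ansatz into the transmission condition $[\sigma\,\partial_n v'_\pm]=0$, the jump condition on $[v'_\pm]$, and the Dirichlet condition on $\partial B_1$, yields for each sign a $2\times 2$ (after using the flux condition to eliminate one unknown) or $3\times 3$ linear system whose determinant is exactly the quantity $F$ in the statement. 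Solving by Cramer's rule gives $B_k^\pm$, $C_k^\pm$, $D_k^\pm$. The claim $C_k^-=-D_k^-$ should fall out of the fact that $v'_-$ vanishes on $\partial B_1$ (i.e. $C_k^- 1^{2-N-k}+D_k^- 1^k=0$), and the positivity $F>0$ follows since $\sigma_c>0$, $k\geq 1$, $N\geq 2$ make both summands in $F$ nonnegative with at least one strictly positive (the case $\sigma_c>1$ makes the first term dominate; $\sigma_c<1$ makes the second positive).

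The main obstacle is purely bookkeeping: one must set up the linear systems with the correct coefficients (the powers $R^k$, $R^{2-N-k}$ and their $r$-derivatives evaluated at $r=R$, together with the $\sigma_c$ weighting in the flux jump) and then verify that Cramer's rule reproduces the stated closed forms for $B_k^\pm$, $C_k^\pm$, $D_k^\pm$ after simplification — in particular checking that the common denominator simplifies to $F=N(N-2+k+k\sigma_c)R^{2-N-2k}+kN(1-\sigma_c)$. There is no conceptual difficulty; the computation is a routine but somewhat delicate exercise in linear algebra with Euler-type solutions, and the cleanest presentation is to reduce to one spherical-harmonic mode at a time, solve the $3\times 3$ system, and then reassemble the series, invoking \cite[Proposition 3.2]{cava2018} for the convergence and regularity of the resulting expansions.
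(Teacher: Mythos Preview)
Your proposal is correct and follows the natural separation-of-variables approach: reduce to a single spherical-harmonic mode, write the general harmonic function as $B r^k$ inside and $C r^{2-N-k}+D r^k$ outside, and solve the resulting linear system coming from the transmission and boundary conditions. The paper itself does not supply a proof of this lemma but simply refers to \cite[Proposition~3.2]{cava2018}, where exactly this computation is carried out; your sketch is therefore essentially the intended argument, with the only caveat that your phrasing ``both summands in $F$ nonnegative'' is inaccurate when $\sigma_c>1$ (the second summand is then negative), though your parenthetical correctly explains why $F>0$ in that case via $R^{2-N-2k}>1$ and $N-2+2k>0$.
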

\begin{remark}
The quantity $F=N(N-2+k+k\sg_c)R^{2-N-2k}+k N (1-\sg_c)$ is strictly positive if $R\in(0,1)$ and $k\ge 1$. Indeed we have
\begin{equation*}
F> N(N-2+k+k\sg_c)+k N(1-\sg_c)=N^2+2N(k-1)\ge N^2>0.    
\end{equation*}
\end{remark}
We are now ready to apply the implicit function theorem to the mapping $\Psi$ defined by \eqref{psi is dare}. As a consequence we obtain the following more precise version of Theorem \ref{mainthm1}.
\begin{theorem}\label{ift applied}
Define
\begin{equation*}
\begin{aligned}
s(k)&= \frac{k(N+k-1)-(N+k-2)(k-1)R^{2-N-2k}}{k( N+k-1)+k(k-1)R^{2-N-2k}} \text{ for }k = 1,2,\ldots,\\
\Sigma&=\setbld{s\in (0,\infty)}{s=s(k)
\,\text{ for some }k = 1,2,\ldots}.
\end{aligned}
\end{equation*}
If $\sg_c\notin \Sigma$, then there exists $\varepsilon>0$ such that, for all $f\in \mathcal{X}$ with $\norm{f}_{\mathcal{X}}<\varepsilon$, there exists a unique $g(f)\in\mathcal{Y}$ such that the pair $(D_f,\Omega_{g(f)})$ is a solution of the overdetermined problem \eqref{odp}. Moreover, $\Sigma$ is a finite subset of $(0,1]$.
\end{theorem}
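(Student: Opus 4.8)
The plan is to verify the hypotheses of the implicit function theorem (Theorem \ref{ift}) for the map $\Psi:\mathcal{X}\times\mathcal{Y}\to\mathcal{Z}$ at the point $(x_0,y_0)=(0,0)$, taking $\mathcal{X}$, $\mathcal{Y}$, $\mathcal{Z}$ to be the Banach spaces defined above and $\Psi$ the mapping \eqref{psi is dare}. We already know that $\Psi$ is Fr\'echet differentiable near $(0,0)$ and that $\Psi(0,0)=0$ (since concentric balls solve \eqref{odp}), and by Theorem \ref{Psi'=something} the partial derivative in the second variable is $\pa_y\Psi(0,0)(g)=\pa_n v'_+ + \pa_{nn}v\,g$. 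So the only thing left to check is that $\pa_y\Psi(0,0):\mathcal{Y}\to\mathcal{Z}$ is a bounded invertible linear transformation; once this is done, Theorem \ref{ift} immediately produces, for each small $f\in\mathcal{X}$, a unique $g(f)\in\mathcal{Y}$ with $\Psi(f,g(f))=0$, i.e.\ with $(D_f,\Omega_{g(f)})$ a solution of \eqref{odp}, which is exactly the claim.

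First I would compute $\pa_y\Psi(0,0)(g)$ explicitly on the spherical-harmonic basis. Writing $g(\theta)=\sum_{k\ge1}\sum_{i}\al_{k,i}^+ Y_{k,i}(\theta)$, Lemma \ref{lemma v' expansion} gives $v'_+(r\theta)=\sum_{k,i}\al_{k,i}^+(C_k^+ r^{2-N-k}+D_k^+ r^k)Y_{k,i}(\theta)$ on $(R,1]$, so $\pa_n v'_+$ on $\pa\Omega=\SS^{N-1}$ is $\sum_{k,i}\al_{k,i}^+\big((2-N-k)C_k^+ + k D_k^+\big)Y_{k,i}$. For the second term, $v=v(B_R,B_1)$ is radial, so $\pa_{nn}v$ is a constant on $\pa\Omega$; one computes it from the explicit radial solution of \eqref{pb d}. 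Combining, $\pa_y\Psi(0,0)$ is diagonal in the basis $\{Y_{k,i}\}$: it multiplies the $k$-th mode by a number $\mu_k$ which, after simplification, is a nonzero constant times $\big(s(k)-\sg_c\big)$ (up to sign and a positive factor). The $k=0$ mode is excluded because $\mathcal{Y}$ and $\mathcal{Z}$ are the zero-mean subspaces. Boundedness of $\pa_y\Psi(0,0):C^{2+\al}\to C^{1+\al}$ is built into the Fr\'echet differentiability already established; to invert it, I would note that the only obstruction is a vanishing multiplier $\mu_k$, and $\mu_k\ne0$ for every $k\ge1$ precisely when $\sg_c\ne s(k)$ for all $k$, i.e.\ $\sg_c\notin\Sigma$. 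One then checks that the inverse, which divides the $k$-th mode by $\mu_k$, is bounded from $\mathcal{Z}$ to $\mathcal{Y}$: since $\mu_k$ grows like $k$ (the leading behaviour of $(2-N-k)C_k^+ + kD_k^+$, after dividing by $F$, is linear in $k$) and $\mu_k$ is bounded away from zero for large $k$, the operator $\mu_k^{-1}$ maps $C^{1+\al}$ into $C^{2+\al}$ boundedly, with the finitely many small-$k$ modes handled trivially. This is the standard "elliptic gain of one derivative" encoded in the symbol.

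The main obstacle is the invertibility verification, and within it the genuinely delicate point is twofold: identifying the multiplier $\mu_k$ correctly from the constants $B_k^\pm,C_k^\pm,D_k^\pm$ in Lemma \ref{lemma v' expansion} and recognizing that its vanishing set is exactly $\{s(k)\}_k$, and then confirming the quantitative decay/growth of $\mu_k$ so that $\mu_k^{-1}$ really is a bounded operator between the H\"older spaces (not merely a formal algebraic inverse on finite sums). The asymptotics $s(k)\to 1$ as $k\to\infty$ (since $R<1$ makes $R^{2-N-2k}\to0$) show that $\Sigma$ can only accumulate at $1=\sg_c$'s forbidden value in the limit, so for $\sg_c\notin\Sigma$ there is a uniform lower bound $|s(k)-\sg_c|\ge\delta>0$ for all large $k$, which together with $|\mu_k|\sim ck$ gives the needed bound $|\mu_k^{-1}|\le C/k$; this is what legitimizes passing from the formal diagonal inverse to a bounded map $\mathcal{Z}\to\mathcal{Y}$. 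Once $\pa_y\Psi(0,0)$ is shown bounded and invertible, Theorem \ref{ift} applies verbatim and yields the conclusion, with the continuity (indeed Fr\'echet differentiability) of $f\mapsto g(f)$ coming for free.
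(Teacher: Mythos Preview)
Your approach is correct and essentially identical to the paper's: apply the implicit function theorem to $\Psi$ at $(0,0)$, compute $\pa_y\Psi(0,0)$ as a diagonal operator on the spherical-harmonic basis using Lemma~\ref{lemma v' expansion} and Theorem~\ref{Psi'=something}, and observe that the multipliers $\beta_k$ (your $\mu_k$) vanish precisely when $\sg_c=s(k)$. The paper is terser on the inverse---it simply asserts that $\pa_y\Psi(0,0)$ is invertible iff $\beta_k\ne0$ for all $k$, leaving the boundedness of the inverse to the open mapping theorem once bijectivity is established---whereas you spell out the asymptotics $\beta_k\sim k/N$ to justify directly that $\beta_k^{-1}$ maps $\mathcal{Z}$ to $\mathcal{Y}$.

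One correction: since $0<R<1$ and the exponent $2-N-2k\to-\infty$, we have $R^{2-N-2k}\to\infty$ (not $0$), and consequently $s(k)\to -1$ (not $1$); the paper notes this in the remark following the theorem. Your lower bound $|s(k)-\sg_c|\ge\delta>0$ for large $k$ is therefore still valid (indeed easier, since $-1<0<\sg_c$), so the slip is harmless for the argument.
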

\begin{proof}
This theorem consists in a direct application of the first part of Theorem \ref{ift}. We know that the mapping $(f,g)\mapsto\Psi(f,g)$ is Fr\'echet differentiable and its partial Fr\'echet derivatives were computed in Theorem \ref{Psi'=something}. We now need to prove that the mapping $\pa_y \Psi(0,0): \mathcal{Y}\to \mathcal{Z}$ is a bounded and invertible linear transformation whenever $\sg_c\notin \Sigma$. Since the function $v'_+$ has a linear and continuous dependence on $g$ (see problem \eqref{u'out}), the map defined by $\pa_y \Psi(0,0):\mathcal{Y}\to\mathcal{Z}$ is also linear and bounded. We are left to prove the invertibility of $\pa_y \Psi(0,0)$. To this end, let us write the spherical harmonic expansion of the expression of $\pa_y \Psi(0,0)(g)$ given in \eqref{deri g}, with the aid of Lemma \ref{lemma v' expansion}. Under the assumption \eqref{h_in h_out exp}, we get
\begin{equation}\label{preserves eigenspaces}
\pa_y \Psi(0,0)(g)= \pa_y \Psi(0,0) \Big(\sum_{k=1}^\infty \sum_{i=1}^{d_k} \alpha_{k,i}^+  Y_{k,i}\Big)
= \sum_{k=1}^\infty \sum_{i=1}^{d_k} \beta_k     \alpha_{k,i}^+ Y_{k,i},
\end{equation}
where 
\begin{equation}\label{beta_k}
\beta_k = \frac{(N+k-1)(\sg_c-1)k+(N-2+k+k\sg_c)(k-1)R^{2-N-2k}}{F}.
\end{equation}
In particular, \eqref{preserves eigenspaces} implies that the map $\pa_y \Psi(0,0):\mathcal{Y}\to\mathcal{Z}$ preserves the eigenspaces of the Laplace--Beltrami operator. Moreover, $\pa_y \Psi(0,0)$ is invertible if and only if $\beta_k\ne 0$ for all $k=1,2,\ldots$, that is to say, if and only if $\sg_c\notin\Sigma$.
We will now prove the last assertion of the theorem, namely that $\Sigma$ is a finite subset of $(0,1]$ (and thus, the implicit function theorem can always be applied for $\sg_c>1$). First of all, since $\Sigma\subset (0,\infty)$ by definition, the inclusion $\Sigma\subset (0,1]$ follows from the inequality below (notice that equality holds for $k=1$):
\begin{equation*}
s(k)\le \frac{k(N+k-1)}{k(N+k-1)+k(k-1)R^{2-N-2k}}\le 1.    
\end{equation*}
Moreover, since $s(k)$ tends to $-1$ as $k\to\infty$, the set $\Sigma$ is finite. We remark that the actual cardinality of $\Sigma$ highly depends on the radius $R$. As a matter of fact, $\Sigma$ is empty for small enough values of $R$. On the other hand, as an asymptotic analysis shows, $\Sigma$ can have an arbitrarily large number of elements if we take $R$ sufficiently close to $1$.  
\end{proof}

\begin{remark}\label{about volume constraint}
Notice that the volume constraint $|\Omega_{g(f)}|=|\Omega|$ is {\bf not satisfied} in general (although the discrepancy is $o(\norm{f}_{\mathcal{X}})$ for $\norm{f}_{\mathcal{X}}$ small, as we required elements of $\mathcal{Y}$ to have vanishing integral). Nevertheless, we can apply a small correcting homothety to restore the volume constraint as done in \cite[Corollary 5.6]{cavaphd}. This ensures the existence of a function $\widehat{g}(f)\in \mathcal{C}^{2,\alpha}(\pa\Omega)$ such that the pair $(D_f,\Omega_{\widehat{g}(f)})$ solves the overdetermined problem \eqref{odp} and the volume constraint $|\Omega_{\widehat{g}(f)}|=|\Omega|$.
\end{remark}
\begin{corollary}\label{heredity}
Suppose that $f=\sum_{k=1}^\infty\sum_{i=1}^{d_k}\alpha_{k,i}^- Y_{k,i}$. Then the following first order approximation for $g(f)$ holds true for $f\to 0$:
\begin{equation*}
g(f)=\sum_{k=1}^\infty\sum_{i=1}^{d_k} \frac{\alpha_{k,i}^-(N+2k-2)(\sg_c-1)k R^{1-k}}{(N+k-1)(\sg_c-1)k + (N-2+k+k\sg_c)(k-1)R^{2-N-2k}}  Y_{k,i} + o(\norm{f}_{\mathcal{X}})
\end{equation*}
\end{corollary}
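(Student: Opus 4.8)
The plan is to apply the formula for the Fr\'echet differential of $g$ provided by Theorem~\ref{ift}, namely $g'(0) = -\pa_y\Psi(0,0)^{-1}\,\pa_x\Psi(0,0)$, and then evaluate both operators on a generic spherical harmonic $Y_{k,i}$. By definition of Fr\'echet differentiability and the normalization $g(0)=0$, we have $g(f) = g'(0)(f) + o(\norm{f})$ as $f\to 0$, so it suffices to compute $g'(0)(Y_{k,i})$ for each mode and assemble the series.

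First I would apply $\pa_x\Psi(0,0)$ to $f = Y_{k,i}$ (i.e. to the perturbation field supported on $\pa D$ with $\alpha_{k,i}^- = 1$ and all other coefficients zero). By Theorem~\ref{Psi'=something}, $\pa_x\Psi(0,0)(f) = \pa_n v'_-$ on $\pa\Omega = \SS^{N-1}$, and by Lemma~\ref{lemma v' expansion} the outer expression for $v'_-$ is $v'_-(r\theta) = \alpha_{k,i}^-(C_k^- r^{2-N-k} + D_k^- r^k) Y_{k,i}(\theta)$. Differentiating in $r$ and evaluating at $r=1$ gives $\pa_n v'_- = \big((2-N-k)C_k^- + k D_k^-\big) Y_{k,i}$; using $C_k^- = -D_k^- = (\sg_c-1)k R^{1-k}/F$ this simplifies to a scalar multiple of $Y_{k,i}$, which I will call $\gamma_k Y_{k,i}$. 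A short computation yields $\gamma_k = (N+2k-2)C_k^- = (N+2k-2)(\sg_c-1)k R^{1-k}/F$.

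Next, since Theorem~\ref{ift applied} (equation~\eqref{preserves eigenspaces}) shows $\pa_y\Psi(0,0)$ acts on the $k$-th eigenspace as multiplication by the scalar $\beta_k$ of~\eqref{beta_k}, its inverse acts as multiplication by $\beta_k^{-1}$ (well-defined precisely because $\sg_c\notin\Sigma$, which is the standing hypothesis inherited from Theorem~\ref{ift applied}). Therefore $g'(0)(Y_{k,i}) = -\gamma_k/\beta_k \cdot Y_{k,i}$. Substituting the expression for $\beta_k$ and cancelling the common denominator $F$, the ratio becomes
\[
-\frac{\gamma_k}{\beta_k} = \frac{(N+2k-2)(\sg_c-1)k R^{1-k}}{(N+k-1)(\sg_c-1)k + (N-2+k+k\sg_c)(k-1)R^{2-N-2k}},
\]
where I must keep track of the sign: the minus sign from the implicit-function formula combines with the signs hidden in $\gamma_k$ and $\beta_k$ to produce the stated (positive-numerator) coefficient. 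Multiplying by $\alpha_{k,i}$ and summing over $k$ and $i$, using linearity of $g'(0)$ and continuity to pass the operator through the series, gives exactly the claimed first-order approximation.

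The main obstacle is purely bookkeeping: correctly tracking the sign conventions in $C_k^-, D_k^-, \beta_k$ and the minus sign in the implicit function theorem, and verifying that the denominator $F$ (which appears in both $\gamma_k$ and $\beta_k$) genuinely cancels so that the final coefficient has the clean form stated. A secondary technical point is justifying the termwise application of the linear operators $\pa_x\Psi(0,0)$ and $\pa_y\Psi(0,0)^{-1}$ to the (possibly infinite) spherical harmonic expansion of $f$; this follows from the boundedness of these operators between the relevant H\"older spaces, established in the discussion preceding Theorem~\ref{Psi'=something} and in Theorem~\ref{ift applied}, together with the fact that both operators diagonalize with respect to the eigenspace decomposition.
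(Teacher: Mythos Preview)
Your approach is exactly the paper's: apply the implicit function theorem to get $g'(0)=-\pa_y\Psi(0,0)^{-1}\pa_x\Psi(0,0)$, compute $\pa_x\Psi(0,0)(Y_{k,i})=\pa_n v'_-=\gamma_k Y_{k,i}$ via Lemma~\ref{lemma v' expansion}, invert $\pa_y\Psi(0,0)$ modewise using $\beta_k$, and cancel the common $F$. One slip to fix: the radial derivative at $r=1$ gives $(2-N-k)C_k^-+kD_k^-=(2-N-2k)C_k^-=-(N+2k-2)C_k^-$, so your $\gamma_k$ is off by a sign (the paper writes $\gamma_k=(2-N-2k)(\sg_c-1)kR^{1-k}/F$); with this correction your displayed expression for $-\gamma_k/\beta_k$ follows honestly rather than by the sign hand-wave you flagged.
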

\begin{proof}
This result is a consequence of \eqref{second part} applied to the functional $\Psi$.
We recall that, by \eqref{deri f}, $\pa_x \Psi(0,0)(f)=\pa_n v'_-$. Moreover, by
Lemma \ref{lemma v' expansion}, we get 
\begin{equation*}
\pa_x \Psi(0,0)(f)= \pa_x \Psi(0,0)\Big( \sum_{k=1}^\infty\sum_{i=1}^{d_k} \alpha_{k,i}^- Y_{k,i}   \Big) = \sum_{k=1}^\infty\sum_{i=1}^{d_k} \gamma_k \alpha_{k,i}^-   ,  
\end{equation*}
where 
\begin{equation}\label{gamma_k}
\gamma_k = \frac{2-N-2k}{F}(\sg_c-1)k R^{1-k}.
\end{equation}
By the second part of Theorem \ref{ift}, we get that the map $f\mapsto g(f)$ is Fr\'echet differentiable and 
\begin{equation}\label{g'(f)=...}
g'(f)=g'\Big(  \sum_{k=1}^\infty\sum_{i=1}^{d_k} \alpha_{k,i}^- Y_{k,i} \Big) =  -\sum_{k=1}^\infty\sum_{i=1}^{d_k} \frac{\gamma_k}{\beta_k} \alpha_{k,i}^- Y_{k,i},
\end{equation}
where the coefficients $\beta_k$ and $\gamma_k$ are defined by \eqref{beta_k} and \eqref{gamma_k} respectively. The claim clearly follows from \eqref{g'(f)=...}.
\end{proof}
\section{Shape derivative of the Kohn--Vogelius functional}\label{shape derivative of KV}
In this section we will compute the shape derivative of the Kohn--Vogelius functional $\mathcal{F}$ with respect to perturbations of the outer boundary $\pa\Omega$. To this end, let $h:\rn\to\rn$ be a smooth perturbation field and define $D_t=({\rm Id}+th)(D)$ and $\Omega_t = ({\rm Id}+th)(\Omega)$. Moreover, suppose that $h$ acts only on $\pa\Omega$, that is $D_t\equiv D$ for all $t>0$. Let $v_t= v(D,\Om_t)$, $w_t=w(D,\Omega_t)$ and $c(t)=c(\Omega_t)$. The map $t\mapsto c(t)$ is clearly differentiable at $t=0$ by Lemma \ref{hadamard formula}. Finally, the computations of the shape derivative of the state functions $v_t$ and $w_t$ are contained in the following proposition.
\begin{proposition}
The state functions $v_t$ and $w_t$, defined as the solutions to problems \eqref{pb d} and \eqref{pb n} with $\Omega=\Omega_t$, are shape differentiable, and their shape derivatives $v'$ and $w'$ are characterized as the solutions to the following boundary value problems.

\noindent\begin{minipage}{.35\linewidth}
\begin{equation}\label{v'}
\begin{cases} -\dv(\sg \gr v') =0 \quad\mbox{ in } \Omega,\\
v'= -\partial_n v\, h\cdot n \quad \mbox{ on }\pa\Om.
\end{cases}
\end{equation}
\end{minipage}%
\begin{minipage}{.55\linewidth}
\begin{equation}\label{w'}
\begin{cases}
-\dv(\sg \gr w')=0 \quad \mbox{ in } \Omega,\\
\partial_n w'=-\partial_{nn}w \,h\cdot n + \gr_\tau w\cdot\gr_\tau (h\cdot n) 
+c'
\, \mbox{ on }\partial \Omega,\\
\int_{\partial\Omega} w' = -\int_{\partial\Omega} (c+Hw) \,h\cdot n.
\end{cases}
\end{equation}
\end{minipage}
\end{proposition}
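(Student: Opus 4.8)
The plan is to compute the shape derivatives of the state functions $v_t$ and $w_t$ by differentiating their respective defining relations. Since $D_t\equiv D$, the coefficient $\sigma$ does not depend on $t$, so the interior equation $-\dv(\sigma\gr v_t)=1$ differentiates directly to $-\dv(\sigma\gr v')=0$ in $\Omega$ (and likewise for $w'$), once we know the maps are shape differentiable at all. Shape differentiability itself follows from the same transport/implicit-function-theorem machinery already invoked in the excerpt (the remarks preceding Lemma \ref{lemma v' pm} and the reference to \cite[Theorem 5.3.2]{HP2005} and \cite[Lemma 5.1]{cavaphd}), applied here with perturbations supported away from the fixed interface $\pa D$; I would simply cite those and concentrate on identifying the boundary conditions.

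For $v'$: the Dirichlet condition $v_t=0$ on $\pa\Omega_t$ is an identity along the moving boundary, so differentiating it in the sense of \eqref{f'=def} along the flow gives $v' + \pa_n v\,(h\cdot n)=0$ on $\pa\Omega$, which is the stated condition. This is the standard Hadamard computation for a homogeneous Dirichlet datum and requires no subtlety beyond the chain rule.

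For $w'$: here there are three conditions to differentiate. The interior equation gives $-\dv(\sigma\gr w')=0$ as above. The Neumann condition $\pa_{n_t} w_t = c(t)$ on $\pa\Omega_t$ is the delicate one: differentiating $\gr w_t\cdot n_t$ along the flow produces both the material derivative term and the variation of the normal $n_t'= -\gr_\tau(h\cdot n)$ (this is the classical formula for the shape derivative of the unit normal), and after rewriting the normal-of-the-Hessian term via the decomposition \eqref{decomp lapl} together with the equation satisfied by $w$, one collects $\pa_n w' = -\pa_{nn}w\,(h\cdot n) + \gr_\tau w\cdot\gr_\tau(h\cdot n) + c'$. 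Finally the normalization $\int_{\pa\Omega_t} w_t=0$ is a surface integral over a moving boundary whose integrand also varies, so I apply the second Hadamard formula from Lemma \ref{hadamard formula} with $g_t = w_t$: $0 = \int_{\pa\Omega} w' + \int_{\pa\Omega}(Hw+\pa_n w)\,h\cdot n$, and since $\pa_n w = c$ on $\pa\Omega$ this is exactly $\int_{\pa\Omega} w' = -\int_{\pa\Omega}(c+Hw)\,h\cdot n$.

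The main obstacle is the careful bookkeeping in the Neumann boundary condition for $w'$: one must correctly account for the variation of the normal vector field $n_t$ and convert the resulting second-order normal derivative into the tangential form using \eqref{decomp lapl} and the PDE for $w$, being careful that $\sigma\equiv 1$ in a neighborhood of $\pa\Omega$ so that the Laplacian decomposition applies there without interference from the coefficient jump. Everything else is routine differentiation of identities along the flow combined with the Hadamard formulas of Lemma \ref{hadamard formula}.
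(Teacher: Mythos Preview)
Your argument is correct, and for $v'$ and for the normalization condition on $w'$ it matches the paper's proof line by line. The one genuine difference is in how you obtain the Neumann boundary condition for $w'$. You differentiate the pointwise identity $\nabla w_t\cdot n_t=c(t)$ along the flow, invoking the classical formula $n'=-\nabla_\tau(h\cdot n)$ for the variation of the normal; the paper instead differentiates the \emph{weak formulation}
\[
\int_{\Omega_t}\sigma\nabla w_t\cdot\nabla\varphi-\int_{\partial\Omega_t}c(t)\varphi=\int_{\Omega_t}\varphi
\]
with a fixed test function $\varphi$, applies the Hadamard formulas of Lemma~\ref{hadamard formula}, and then uses tangential integration by parts \eqref{tangential integration by parts} together with the decomposition \eqref{decomp lapl} (in the form $-1=\partial_{nn}w+cH+\Delta_\tau w$ on $\partial\Omega$) to peel the derivatives off $\varphi$ and read off the pointwise condition. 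Your route is shorter and more geometric, but requires a correct bookkeeping of the material derivative of $n_t$ (strictly speaking the formula $-\nabla_\tau(h\cdot n)$ is the shape derivative of a suitable extension of $n$, and care is needed if $h$ has a tangential part); the paper's weak-form route avoids ever differentiating $n_t$ and keeps everything inside integral identities, at the cost of the extra integration-by-parts step. One small remark: in your direct approach the Hessian term $(D^2w\,n)\cdot n$ is already $\partial_{nn}w$, so \eqref{decomp lapl} is not actually needed---that identity is precisely what the paper's weak-form computation uses to produce $\partial_{nn}w$ from $\Delta_\tau w$.
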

\begin{proof}
The characterization of $v'$ in \eqref{v'} is just \eqref{u'out} with $g=h\cdot n$. 
The derivation of \eqref{w'} is more delicate. First of all, the proof of differentiability of the map $t\mapsto w_t$ is a standard application of Theorem \ref{ift} along the same lines as \cite[Theorem 5.5.1 p.228]{HP2005}. 

We will now prove \eqref{w'}. Let $\varphi\in H^2(\Omega)$ be a given test function. Since $\pa\Omega$ is smooth, it admits an extension to the whole $H^2(\rn)$ (see \cite{sobolev spaces}), which will still be denoted by $\varphi$. We now differentiate the weak form 
\begin{equation*}
\int_{\Omega_t} \sg \gr w_t\cdot \gr \varphi - \int_{\pa\Omega_t} c(t) \varphi = \int_{\Omega_t} \varphi
\end{equation*}
with respect to $t$ at $t=0$, by means of Lemma \ref{hadamard formula}.
We get
\begin{equation*}
\int_{\Omega}\sg \gr w'\cdot \gr \varphi + \int_{\pa\Omega} \gr w\cdot \gr \varphi\, h\cdot n - \int_{\pa\Omega} c' \varphi - \int_{\pa\Omega} \left(cH \varphi + c \pa_n \varphi\right) \, h\cdot n = \int_{\pa\Omega} \varphi\,h\cdot n.
\end{equation*}
By employing the use of the identity 
\begin{equation*}
\gr w\cdot \gr \varphi - c\pa_n\varphi = \gr_\tau w\cdot \gr_\tau \varphi\quad \textrm{ on }\pa\Omega,
\end{equation*}
we get 
\begin{equation}\label{weak form 1}
\int_{\Omega}\sg \gr w'\cdot \gr \varphi + \int_{\pa\Omega} \gr_\tau w\cdot \gr_\tau \varphi \, h\cdot n - \int_{\pa\Omega} c' \varphi - \int_{\pa\Omega} cH \varphi \, h\cdot n = \int_{\pa\Omega}\varphi \, h\cdot n.
\end{equation}
By applying tangential integration by parts \eqref{tangential integration by parts} on the second integral in the above, 
\begin{equation}\label{weak form 2}
\int_{\pa\Omega}\gr_\tau w\cdot \gr_\tau \varphi \, h\cdot n= - \int_{\pa \Omega} \De_\tau w \varphi \, h\cdot n  - \int_{\pa\Omega} \gr_\tau w \cdot \gr_\tau (h\cdot n)\varphi.
\end{equation}
The term $\De_\tau w$ in the above can by handled by combining \eqref{pb n} and the decomposition formula for the Laplace operator into normal and tangential components \eqref{decomp lapl}: 
\begin{equation}\label{decomp lapl 1}
-1=\Delta w = \pa_{nn}w + cH + \De_\tau w \quad \textrm{ on } \pa \Omega.
\end{equation}
By combining \eqref{weak form 1}, \eqref{weak form 2} and \eqref{decomp lapl 1}, we get
\begin{equation*}
\int_{\Omega}\sg \gr w'\cdot \gr \varphi + \int_{\pa\Omega} \left( \pa_{nn}w\,h\cdot n - \gr_\tau w\cdot \gr_\tau (h\cdot n)  -c'     \right) \varphi=0.
\end{equation*}
Since $\varphi$ is arbitrary, this implies the first two lines of \eqref{w'}. Lastly, the normalization condition of \eqref{w'} follows by applying Lemma \ref{hadamard formula} to $\int_{\pa\Omega_t} w_t =0$.
\end{proof}

\begin{theorem}\label{shape kv}
The Kohn--Vogelius type functional $\mathcal{F}$ defined by \eqref{def kv} is shape differentiable at $\Omega$. Moreover, for any smooth $h:\rn\to\rn$ whose support is compactly contained in $\rn\setminus \ol D$, we have  
\begin{equation*}
\mathcal{F}'(\Omega)(h)=\int_{\pa\Omega}\left\lbrace -|\gr w|^2+2(1+cH)
w
-|\gr v|^2+2c^2\right\rbrace h\cdot n.
\end{equation*}
\end{theorem}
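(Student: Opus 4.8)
The plan is to compute $\mathcal{F}'(\Omega)(h)$ directly from the definition $\mathcal{F}(\Omega)=\int_\Omega \sg|\gr v-\gr w|^2$ by expanding the square and differentiating each piece with the Hadamard formulas of Lemma \ref{hadamard formula}, then using the boundary value problems \eqref{v'} and \eqref{w'} to eliminate the shape derivatives $v'$ and $w'$. First I would write $\mathcal{F}(\Omega_t)=\int_{\Omega_t}\sg|\gr v_t|^2 - 2\int_{\Omega_t}\sg\gr v_t\cdot\gr w_t + \int_{\Omega_t}\sg|\gr w_t|^2$ and differentiate at $t=0$ term by term. For each domain integral $J(t)=\int_{\Omega_t}\sg\,(\cdots)$ the Hadamard formula gives a bulk term $\int_\Omega \sg\,(\cdots)'$ plus a boundary term $\int_{\pa\Omega}\sg\,(\cdots)\,h\cdot n$; since $h$ is supported away from $\ol D$, on $\pa\Omega$ we have $\sg\equiv 1$, so the boundary contributions are simply $\int_{\pa\Omega}\{|\gr v|^2-2\gr v\cdot\gr w+|\gr w|^2\}h\cdot n = \int_{\pa\Omega}|\gr v-\gr w|^2\,h\cdot n$, and because $v=w=0$ on $\pa\Omega$ this reduces to $\int_{\pa\Omega}(\pa_n v-\pa_n w)^2\,h\cdot n$.

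The bulk terms are $\int_\Omega \sg\,(2\gr v\cdot\gr v' - 2\gr v'\cdot\gr w - 2\gr v\cdot\gr w' + 2\gr w\cdot\gr w')$. I would regroup these as $2\int_\Omega\sg\gr v'\cdot\gr(v-w) + 2\int_\Omega\sg\gr w'\cdot\gr(w-v)$ and handle each using the weak formulations of the state equations together with \eqref{v'} and \eqref{w'}. For the $v'$ term, I would integrate by parts: since $-\dv(\sg\gr v')=0$ in $\Omega$ (by \eqref{v'}), and using $\dv(\sg\gr v)=\dv(\sg\gr w)=-1$, one gets $\int_\Omega\sg\gr v'\cdot\gr(v-w) = \int_{\pa\Omega}\sg\pa_n(v-w)\,v' + \int_\Omega v'\dv(\sg\gr(v-w)) = \int_{\pa\Omega}\pa_n(v-w)\,v'$, because the last bulk integral vanishes. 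Substituting the Dirichlet datum $v'=-\pa_n v\,h\cdot n$ from \eqref{v'} turns this into $-\int_{\pa\Omega}\pa_n v(\pa_n v-\pa_n w)\,h\cdot n$. For the $w'$ term the computation is more delicate because $w'$ has an inhomogeneous Neumann condition; I would instead move derivatives onto $w'$, writing $\int_\Omega\sg\gr w'\cdot\gr(w-v) = \int_{\pa\Omega}\sg\pa_n w'\,(w-v) + \int_\Omega w'\,\dv(\sg\gr(w-v))$. Since $w-v=0$ on $\pa\Omega$ the boundary term drops, and $\dv(\sg\gr(w-v))=0$, so this whole term vanishes — wait, that cannot be right since it would make the $w$-dependence collapse. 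So instead I would keep the boundary term by testing the weak form \eqref{weak form 1}-style identity, i.e. use $\varphi = w-v$ (extended suitably) as a test function in the weak formulation \eqref{w'} satisfies, which brings in the Neumann data $\pa_n w' = -\pa_{nn}w\,h\cdot n + \gr_\tau w\cdot\gr_\tau(h\cdot n) + c'$; the tangential term would then be integrated by parts on $\pa\Omega$ via \eqref{tangential integration by parts}, and the $\De_\tau w$ produced would be rewritten using the decomposition \eqref{decomp lapl}, $\De_\tau w = -1 - cH - \pa_{nn}w$ on $\pa\Omega$, exactly as in the proof of the Proposition.

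After assembling all pieces — the boundary contributions from the three Hadamard surface terms, the $v'$-contribution $-\int_{\pa\Omega}\pa_n v(\pa_n v-\pa_n w)\,h\cdot n$, the $c'(0)=0$ simplification (which follows from Lemma \ref{hadamard formula} since $H$ is not needed to be constant here — actually $c'=-|\pa\Omega|^{-1}\int_{\pa\Omega}(1+cH)h\cdot n$ in general, so I must carry it), and the $w'$-contribution — I would collect terms in $\pa_n v$ and $\pa_n w$ on $\pa\Omega$, use $\pa_n v = c$ is \emph{not} assumed here (only $v=0$), and simplify using $\gr v = (\pa_n v)n$, $\gr w=(\pa_n w)n$ on $\pa\Omega$ (since $v,w$ vanish there so their tangential gradients vanish), so that $|\gr v|^2=(\pa_n v)^2$ and $|\gr w|^2=(\pa_n w)^2$, and $\pa_n w = c$ from \eqref{pb n}. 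The target formula's terms $-|\gr v|^2+2c^2 = -(\pa_n v)^2 + 2c^2$ and $-|\gr w|^2 = -c^2$ and $2(1+cH)w$ should emerge; note $2(1+cH)w$ vanishes on $\pa\Omega$ pointwise since $w=0$ there, which signals that the stated formula is really only meaningful after one recognizes $w=0$ on $\pa\Omega$ — so I would double-check that the $w$-term arises from the $\int_{\pa\Omega} c'\varphi$ / normalization bookkeeping rather than from a pointwise boundary value, and present the final identity in the form given. \textbf{The main obstacle} will be the careful handling of the $w'$ term: choosing the right test function, correctly applying tangential integration by parts to the $\gr_\tau w\cdot\gr_\tau(h\cdot n)$ contribution, and bookkeeping the $c'(0)$ and the normalization condition $\int_{\pa\Omega}w' = -\int_{\pa\Omega}(c+Hw)h\cdot n$ so that all constant-mode contributions cancel consistently; the $v'$ term and the raw Hadamard surface terms are routine by comparison.
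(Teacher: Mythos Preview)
Your overall strategy---Hadamard formula, then eliminate $v'$ and $w'$ via the weak formulations of \eqref{v'} and \eqref{w'}, with tangential integration by parts and the decomposition \eqref{decomp lapl}---is exactly the paper's approach, and your regrouping $2\int_\Omega\sg\gr v'\cdot\gr(v-w)+2\int_\Omega\sg\gr w'\cdot\gr(w-v)$ is equivalent to the paper's after one observes (as the paper does) that $\int_\Omega\sg\gr v\cdot\gr v'=\int_\Omega\sg\gr v\cdot\gr w'=0$ by testing \eqref{v'}--\eqref{w'} with $v$.

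However, there is a genuine gap: you repeatedly assume $w=0$ on $\pa\Omega$, and this is false. The Neumann problem \eqref{pb n} imposes $\pa_n w=c$ on $\pa\Omega$ together with the \emph{mean-zero} normalization $\int_{\pa\Omega}w=0$; nowhere is $w$ required to vanish pointwise on $\pa\Omega$. This error appears in three places and each time it corrupts the computation. First, in the surface term $(C)=\int_{\pa\Omega}|\gr v-\gr w|^2\,h\cdot n$ you claim $\gr w=(\pa_n w)n$ on $\pa\Omega$; in fact $\gr_\tau w\neq 0$ in general, and the correct expansion is $|\gr v-\gr w|^2=(\pa_n v-c)^2+|\gr_\tau w|^2$, so a $|\gr_\tau w|^2$ term survives and must later cancel against a matching term coming from the $w'$ computation. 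Second, in handling $\int_\Omega\sg\gr w'\cdot\gr(w-v)$ you write ``$w-v=0$ on $\pa\Omega$'' so that the boundary term drops; actually $w-v=w$ on $\pa\Omega$, and it is precisely this surviving $w$ that, after substituting the Neumann datum for $\pa_n w'$ and integrating tangentially, produces the $(1+cH)w$ term in the final formula. Third, your remark that ``$2(1+cH)w$ vanishes on $\pa\Omega$ pointwise since $w=0$ there'' is simply wrong: this term is the substantive contribution of the Neumann state, and it only has zero \emph{integral} when $H$ is constant and $\int_{\pa\Omega}w=0$.

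Once you correct the boundary condition on $w$, the paper's computation goes through essentially as you outlined: the $w'$ term becomes $\int_{\pa\Omega}w\,\pa_n w'$, into which one inserts $\pa_n w'=-\pa_{nn}w\,h\cdot n+\gr_\tau w\cdot\gr_\tau(h\cdot n)+c'$; tangential integration by parts on the middle piece produces $-|\gr_\tau w|^2-w\De_\tau w$, and \eqref{decomp lapl} replaces $\De_\tau w$ by $-1-cH-\pa_{nn}w$. The $-|\gr_\tau w|^2$ cancels the one from $(C)$, the $\pa_{nn}w$ terms cancel each other, the $c'$ term drops by $\int_{\pa\Omega}w=0$, and what remains is exactly $(1+cH)w$. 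Also note that the paper does \emph{not} assume $c'(0)=0$ (that held only in Section~\ref{proof of existence and uniqueness} where $g\in\mathcal Y$ had zero mean); here the $c'$ contribution disappears because it is multiplied by $w$ and integrated.
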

\begin{proof}
The differentiability of $\mathcal{F}$ ensues from that of the state functions $v$, $w$, and Lemma \ref{hadamard formula}.
Now, an application of Lemma \ref{hadamard formula} yields
\begin{equation}\label{F'=something}
\begin{aligned}
\mathcal{F}'(\Omega)(h)= 2\int_\Omega \sg (\gr v - \gr w)\cdot (\gr v' - \gr w')+ \int_{\partial\Omega} |\gr v-\gr w|^2 \,h\cdot n\\
= 
2\underbrace{\int_\Omega \sg \gr w \cdot\gr w'}_{(A)} 
- 2\underbrace{\int_\Omega \sg\gr w\cdot \gr v'}_{(B)} +\underbrace{\int_{\partial\Omega} |\gr v-\gr w |^2 \, h\cdot n}_{(C)},
\end{aligned}
\end{equation}
where, in the last equality, we used the fact that $\int_\Omega \sg \gr v\cdot \gr v'=\int_\Omega \sg \gr v \cdot\gr w'=0$ (this ensues by taking $v$ as a test function in \eqref{v'}-\eqref{w'}).
We will now try to write the expression above as the sum of surface integrals on $\partial\Omega$ by using integration by parts. 
Now, taking $w$ as a test function in \eqref{w'} yields
\begin{equation}\label{A=something}
\begin{aligned}
(A)=\int_\Omega \sg \gr w'\cdot \gr w = \int_{\partial\Omega} \partial_n w'\, w = \int_{\partial \Omega} \left(-\partial_{nn} w\, h\cdot n + \gr_\tau w \cdot \gr_\tau (h\cdot n)+c'\right) w.
\end{aligned}
\end{equation}
We now employ once again the use of tangential integration by parts \eqref{tangential integration by parts} to remove the dependence on $\gr_\tau (h\cdot n)$ in the integral above.
We get
\begin{equation}\label{A=something2}
\int_\Omega \left( \gr_\tau w \cdot \gr_\tau(h\cdot n) \right) w= -\int_\Omega {\dv}_\tau \left( w \gr_\tau w\right) \, h\cdot n= -\int_\Omega \left( |\gr_\tau w|^2 +w \De_\tau w \right) \, h\cdot n.
\end{equation}
This can be simplified further. 
By putting together \eqref{decomp lapl 1}, \eqref{A=something}, \eqref{A=something2} and the normalization condition $\int_{\pa\Omega}w=0$, we obtain
\begin{equation}\label{A=something3}
(A)=\int_\Omega \sg \gr w' \cdot \gr w = \int_{\pa\Omega} \left\{ -|\gr_\tau w|^2+ (1 + cH)
w\right\} \, h\cdot n.
\end{equation}
Similarly, by taking $v'$ as a test function in \eqref{pb n} and recalling the boundary condition of \eqref{v'}, we obtain
\begin{equation}\label{B=something}
(B)=\int_\Omega\sg \gr w\cdot\gr v' = \int_{\partial \Omega}\partial_n w \,v' + \int_\Omega v'= -\int_{\pa\Omega} \pa_n w \,\pa_n v h\cdot n +\int_\Omega v'.
\end{equation}
As far as the term $\int_\Omega v'$ is concerned, consider the following integral identity derived from \eqref{pb d}
\begin{equation*}
\int_{\Omega_t} v_t = \int_{\Omega_t} \sg_t |\gr v_t|^2 \quad \textrm{ for } t\ge 0.
\end{equation*}
Differentiating both members of the equality above by means of the Hadamard formula (Lemma \ref{hadamard formula}), yields
\begin{equation}\label{int v'=}
\int_\Omega v' + \underbrace{\int_{\pa \Omega} v\, h\cdot n}_{=0} = \underbrace{2 \int_\Omega \sg \gr v\cdot \gr v'}_{=0} + \int_{\pa\Omega}|\pa_n v|^2\, h\cdot n.
\end{equation}
We can then rewrite \eqref{B=something} as follows:
\begin{equation}\label{B=something2}
(B)=\int_\Omega \sg \gr w\cdot \gr v' = -c\int_{\pa \Omega} \pa_n v\, h\cdot n + \int_{\pa\Omega} |\pa_n v|^2\, h\cdot n 
\end{equation}
Finally, we have
\begin{equation}\label{C=something}
(C)= \int_{\pa\Omega} |\gr v - \gr w|^2\, h\cdot n = \int_{\pa\Omega} \left( |\pa_n v|^2+ c^2 + |\gr_\tau w|^2 - 2c \pa_n v  \right )\,h\cdot n.
\end{equation}
The claim follows by combining \eqref{F'=something} with the final expressions of $(A)$, $(B)$ and $(C)$ in \eqref{A=something3}, \eqref{B=something2} and \eqref{C=something}.
\end{proof}
\begin{remark}
In proving Theorem \ref{shape kv}, we did not make use of the normalization condition in \eqref{w'}. This is natural, since the functional $\mathcal{F}$ depends on $w$ by means of its gradient only. Indeed, for any normalization that we impose on $w_t$, the computations above yield the same result, namely
\begin{equation*}
\mathcal{F}'(\Omega)(h)=\int_{\pa\Omega}\left\lbrace -|\gr w|^2+2(1+cH)
\left(w-\frac{1}{|\pa\Omega|}\int_{\pa\Omega}w\right)
-|\gr v|^2+2c^2\right\rbrace h\cdot n.
\end{equation*}
In light of the expression above, the normalization condition $\int_{\pa\Omega} w=0$, that was chosen in \eqref{pb n}, is indeed the most natural one.
\end{remark}

\section{Gradient descent method with volume constraint}\label{gradient descent method with volume constraint}
In this section, we describe the numerical algorithm for the outer problem (Problem 2). 
This algorithm is based on a gradient descent method using the shape derivative of the Kohn--Vogelius functional \eqref{def kv}, as mentioned above, and coupled with an augmented Lagrangian for the volume constraint.

\subsection{Shape optimization problem and augmented Lagrangian}
Let us recall the shape optimization problem with volume constraint considered in this paper:
\begin{equation}\label{optipro1}
\min_{\abs{\Omega} = V_{0}} \mathcal{F}(\Omega), 
\end{equation}
where $V_{0}$ is a given volume and the minimization is taken over all possible domains
$\Omega$ that satisfy $\ol D\subset \Omega$. We apply the augmented Lagrangian method to the optimization problem \eqref{optipro1} in order to change the problem with volume constraint into a problem without constraints. We refer to \cite[Section 17.3 and Section 17.4]{NW2006} and \cite[Section 3.3]{PT2018} for the details . 

Let us consider the following optimization problem: 
\begin{equation}\label{optipro2}
\min \, \mathcal{L}(\Omega, \ell, b),  
\end{equation}
where $\mathcal{L}$ is the augmented Lagrangian defined by 
\begin{equation}\label{auglag}
\mathcal{L}(\Omega)=\mathcal{L}(\Omega, \ell, b) = \mathcal{F}(\Omega) - \ell G(\Omega) + \frac{b}{2}G(\Omega)^{2},
\end{equation}
and $G(\Omega)$ is the constraint functional given by 
\begin{equation}\label{constraint}
G(\Omega) = \frac{\abs{\Omega} - V_{0}}{V_{0}}. 
\end{equation} 
In the definition of the augmented Lagrangian \eqref{auglag}, the parameter $\ell$ is a Lagrange multiplier associated with the volume constraint \eqref{constraint} and $b$ is a positive parameter for strengthening the volume constraint.  

By Theorem \ref{shape kv} and $\displaystyle G'(\Omega)(h) = \frac{1}{V_{0}}\int_{\pa \Omega} h \cdot n$, we can calculate the shape derivative of the augmented Lagrangian $\mathcal{L}$ as follows: 
\begin{equation}\label{compderi}
\begin{aligned}
\mathcal{L}'(\Omega)(h) &= \mathcal{F}'(\Omega)(h) - \ell G'(\Omega)(h) + b G(\Omega) G'(\Omega)(h) \\
&= \int_{\pa\Omega} \underbrace{\left(-|\gr w|^2+2w+2cHw-|\gr v|^2+2c^2 - \ell + b \, \frac{\abs{\Omega} - V_{0}}{V_{0}^{2}}\right)}_{=:\phi} h \cdot n. 
\end{aligned}
\end{equation}
The computation \eqref{compderi} shows the descent direction for the augmented Lagrangian $\mathcal{L}$. Indeed, if we take the perturbation field $h$ as $h = -\phi n$ on $\pa \Omega$, it follows that for small $t > 0$
\begin{equation*}
\mathcal{L}(\Omega_t) = \mathcal{L}(\Omega) - t \int_{\pa \Omega} \phi^2 + o(t) < \mathcal{L}(\Omega). 
\end{equation*}
Note that the descent direction $\phi$ is defined only on the boundary $\pa \Omega$. From the numerical point of view, it is necessary to extend the descent direction to the whole domain $\Omega$. We choose the popular extension procedure to do this, see \cite{AP2006, Dg2006, PT2018}. 
The basic idea is to introduce a Hilbert space $V$ of regular perturbation fields defined on $\Omega\setminus \ol D$ and then identify the descent direction of $\mathcal{L}$ by representing the shape derivative $\mathcal{L}'(\Omega)(h)$ with respect to a different inner product $(\cdot,\cdot)_{V}$, instead of the usual $(\cdot,\cdot)_{L^2(\pa \Omega)}$. 

For this purpose, the Hilbert space $V$ is defined by 
\begin{equation*}
V=\setbld{h \in H^{1}(\Omega\setminus \ol D, \, \rn)}{h = 0 \,\, \text{on} \,\, \pa D},
\end{equation*}
with inner product 
\begin{equation*}
(h, \xi)_{V} = \gamma \int_{\Omega\setminus \ol D} \nabla h : \nabla \xi + \int_{\Omega\setminus \ol D} h \cdot \xi,   
\end{equation*}
where $\gamma > 0$ is a small parameter and $\gr h:\gr \xi$ is the double contraction defined by ${\rm tr}\left( \gr h (\gr \xi)^T \right)$. We search for $h \in V$ such that for all test function $\xi \in V$, 
\begin{equation}\label{extension1}
(h, \xi)_{V} = -\mathcal{L}'(\Omega)(\xi) = -\int_{\pa \Omega} \phi\, \xi \cdot n. 
\end{equation}
By \eqref{extension1}, we have $\mathcal{L}'(\Omega)(h) = -(h, h)_{V} < 0$. This implies that the solution $h$ of \eqref{extension1} is also a gradient descent direction for $\mathcal{L}$. 
Moreover, by integration by parts, we can regard \eqref{extension1} as the weak form of the following elliptic system: 
\begin{equation}\label{extension2}
\begin{cases}
- \gamma \Delta h + h = 0 \,\, &\text{in} \,\, \Omega\setminus \ol D, \\
h = 0 \,\, &\text{on} \,\, \pa D, \\
\gamma \dfrac{\pa h}{\pa n} = - \phi n \,\, &\text{on} \,\, \pa \Omega. 
\end{cases}
\end{equation}
Note that the regularity of the solution of \eqref{extension2} depends on that of $\phi$. If we assume that $\Omega$ is of class $C^{2,\alpha}$, then by the standard elliptic regularity theory we only obtain $h \in \cC^{1,\alpha}$ on the boundary $\pa \Omega$ because of the regularity of the additive curvature. However, the regularity of $h$ needs to be $\cC^{2,\alpha}$ from the theoretical point of view. The loss of the regularity can cause numerical instability. Thus we have to assume the initial shape $\Omega_{0}$ to be of class $\C^{\infty}$ in the numerical computation. This allows us to have a regularized descent direction $h$ of class $\cC^\infty$ at each iteration. 

\subsection{The algorithm for the numerical computation}\label{secalgori}
In what follows, we describe the algorithm for the numerical computation. We employ the use of the free software FreeFem++ \cite{He2012} which allows us to solve partial differential equations by the finite element method. 

\begin{breakbox}
\begin{itemize}
\item Put initial shape $\Omega_{0}$ and initial values of the Lagrange multiplier $\ell_{0}$ and $b_{0}$ for the augmented Lagrangian. 
\end{itemize}
For $i = 0,1,\cdots$ until convergence: 
\begin{enumerate} 
\item Compute $c(\Omega_{i})=-|\Omega_{i}|/|\pa\Omega_{i}|$. 
\item Solve the Dirichlet problem \eqref{pb d} and the Neumann problem \eqref{pb n}. 
\item Compute $\phi_{i}$ by \eqref{compderi} and the regularized gradient descent $h_{i}$ of the augmented Lagrangian $\mathcal{L}$ by solving \eqref{extension2}. 
\item Take $\varepsilon_{i} > 0$ small enough and move the domain $\Omega_{i}$ according to $h_{i}$: 
\begin{equation*}
\Omega_{i+1} = (\text{Id} + \varepsilon_{i} h_{i}) (\Omega_{i}). 
\end{equation*}
If the mesh reverses, then we take smaller value of $\varepsilon_{i}$. 
\item Update the parameters of the augmented Lagrangian as follows: 
\begin{equation*}
\ell_{i+1} = \ell_{i} - b_{i} \, G(\Omega_{i}) \,\, \text{and} \,\, b_{i+1} = \begin{cases}
\alpha \, b_{i} \quad &\text{if} \,\, b_{i} < b_{\text{max}}, \\
b_{i} \quad &\text{otherwise},  
\end{cases}
\end{equation*}
where $\alpha$ is a small positive parameter and $\alpha > 1$. Also $b_{\rm max}$ is a positive large parameter. 
\end{enumerate}
\end{breakbox}

\begin{remark}
In the algorithm, it is necessary to compute the mean curvature of $\pa \Omega$ when computing $\phi_i$, see \eqref{compderi}. We solved this task by following \cite[pp.430-431]{FG2008} and \cite[Section 3.6]{PT2018}. 
\end{remark}
\begin{remark}\label{remark 5.2}
We note that we have to choose a suitable parameter $\gamma$ to define the regularized extension field $h$ by solving \eqref{extension2}. We took $\gamma = 3.0$ in our computation. Moreover, we also need to pay attention to choose the parameters of the augmented Lagrangian due to the volume constraint. We took the initial parameters of the augmented Lagrangian as $\ell_{0} = \displaystyle \dfrac{V_{0}}{|\pa \Omega_{0}|} \int_{\pa \Omega_{0}} \left(-|\gr w_0|^2+2w_0+2c(\Omega_0)H_0 w_0-|\gr v_0|^2+2c(\Omega_0)^2 \right) $ and $b_{0} = 0.01$, where $v_0$ and $w_0$ are the solutions of the Dirichlet problem \eqref{pb d} and the Neumann problem \eqref{pb n} in the initial shape $\Omega_0$, respectively and $H_0$ is the mean curvature of the the initial shape $\Omega_0$. Furthermore, we took $\alpha = 1.5$ and $b_{\max} = 1000$.
\end{remark}

\section{Numerical results}\label{Numerical results}
In this section, we show the numerical results according to the algorithm presented in section \ref{secalgori}. Unless otherwise specified, we take $\si_c = 10$. 
\subsection{When $D$ is a ball}
In the first example, we show what happens when the core $D$ is a ball. We considered the case where the core $D$ is the disc of radius $2.7$ centered at the origin 
and the initial shape $\Omega_{0}$ is the region enclosed by the curve $\{ ( 0.6\cdot(8+\cos 3t)\cos(t), 0.5\cdot(8+\cos 3t)\sin(t) ) \,\, | \,\, t \in [0,2\pi) \}$ (Figure \ref{fig:1}). 

\begin{figure}[htbp]
\begin{minipage}{0.5\hsize}
\begin{center}
\includegraphics[width=58.2 mm]{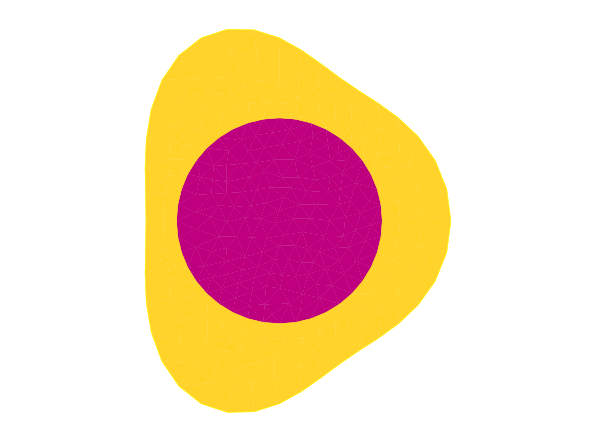}
\end{center}
\end{minipage}
\hfill
\begin{minipage}{0.5\hsize}
\begin{center}
\includegraphics[width=50mm]{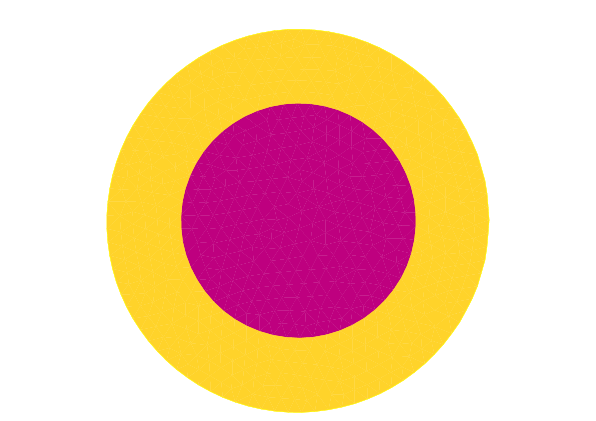}
\end{center}
\end{minipage}
\begin{minipage}{0.5\hsize}
\caption{Initial shape}
\label{fig:1}
\end{minipage}
\hfill
\begin{minipage}{0.5\hsize}
\caption{Final shape}
\label{fig:2}
\end{minipage}
\end{figure}

Figures \ref{fig:1} and \ref{fig:2} show that, if the core $D$ is a ball, then the solution $\Omega$ of outer problem is a ball as suggested by the uniqueness part of Theorem \ref{ift applied} and Remark \ref{about volume constraint}. 
Also, by Figure \ref{fig:3}, we can see that the augmented Lagrangian converges to $0$ oscillating and the volume of $\Omega_n$ also converges to the initial volume in the same way. 
Figure \ref{fig:3} also shows that the augmented Lagrangian is nearly equal to the Kohn--Vogelius functional when the iteration numbers are small. 
\begin{figure}[htbp]
\begin{center}
\includegraphics[width=110mm]{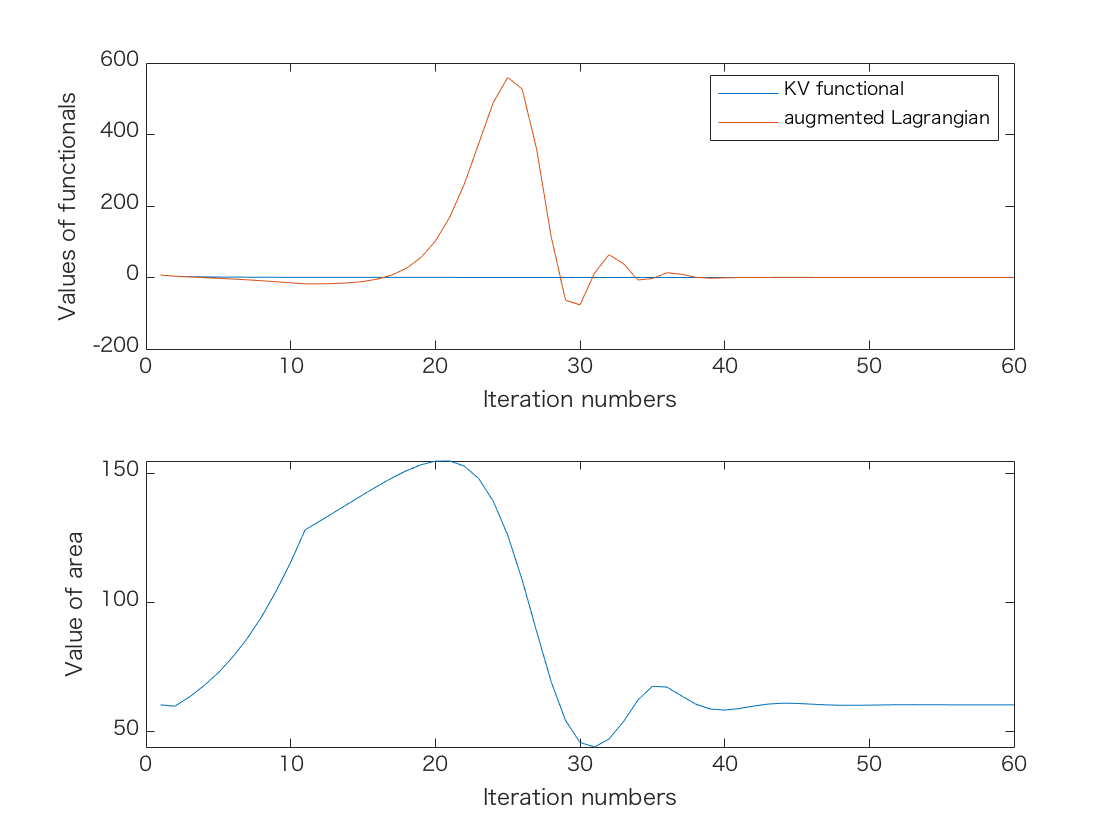}
\end{center}
\caption{Convergence history of functionals and the volume of area}
\label{fig:3}
\end{figure}
By definition, minimizing the augmented Lagrangian $\mathcal{L}$ consists in a compromise between minimizing the Kohn--Vogelius functional $\mathcal{F}$ and the constraint functional $G$. These two tasks are usually in competition with each other: loosely speaking, minimizing the Kohn--Vogelius functional drives $\Omega$ to get larger and more ``rounded" (see also Figures \ref{fig:8} and \ref{fig:9}), which interferes with the minimization of $G$. The balance between these two tendencies is dictated by the choice of the initial parameters $\ell_0$, $b_0$, $\alpha$ and $b_{\rm max}$ and does not remain constant throughout the minimization process.
The initial parameters defined in Remark \ref{remark 5.2} show the following behavior. At first, due to the smallness of the parameters $\ell_{0}$ and $b_{0}$, the Kohn--Vogelius functional drives the minimization process until we get close to a solution of the outer problem with respect to a larger $V_0$. Then $\Omega_i$ starts shrinking in order to fulfill the original volume constraint (depending on the parameters chosen, the algorithm might alternate between the two behaviors described above a few more times in an oscillatory fashion before actually reaching convergence).   
\subsection{$\Omega$ inherits its geometry from $D$}
The second example is in the case where the core $D$ is the region enclosed by the curve $\{ ( 0.3\cdot(8+\cos 3t)\cos t, 0.3\cdot(8+\cos 3t)\sin t ) \,\, | \,\, t \in [0,2\pi) \}$ and the initial shape $\Omega_{0}$ is the interior of the ellipse $\{ ( 4\cos t, 3\sin t ) \,\, | \,\, t \in [0,2\pi) \}$. 
\begin{figure}[htbp]
\begin{minipage}{0.3\hsize}
\begin{center}
\includegraphics[width=57.3 mm]{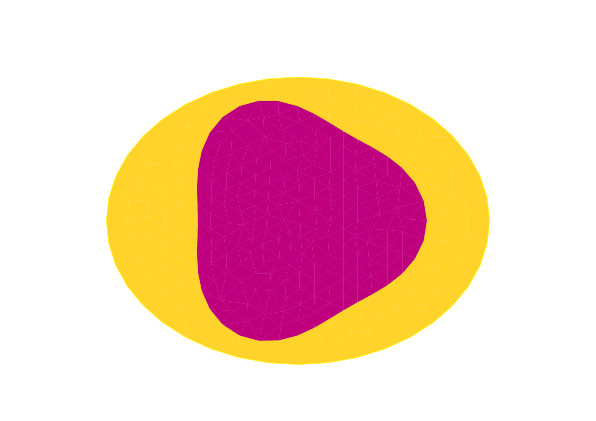}
\end{center}
\end{minipage}
\hfill
\begin{minipage}{0.3\hsize}
\begin{center}
\includegraphics[width=50mm]{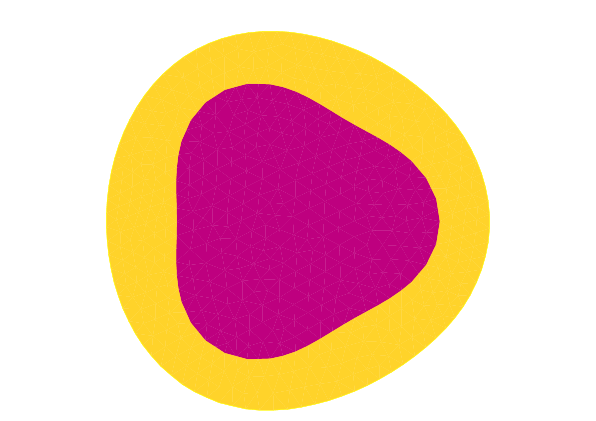}
\end{center}
\end{minipage}
\hfill
\begin{minipage}{0.3\hsize}
\begin{center}
\includegraphics[width=33 mm]{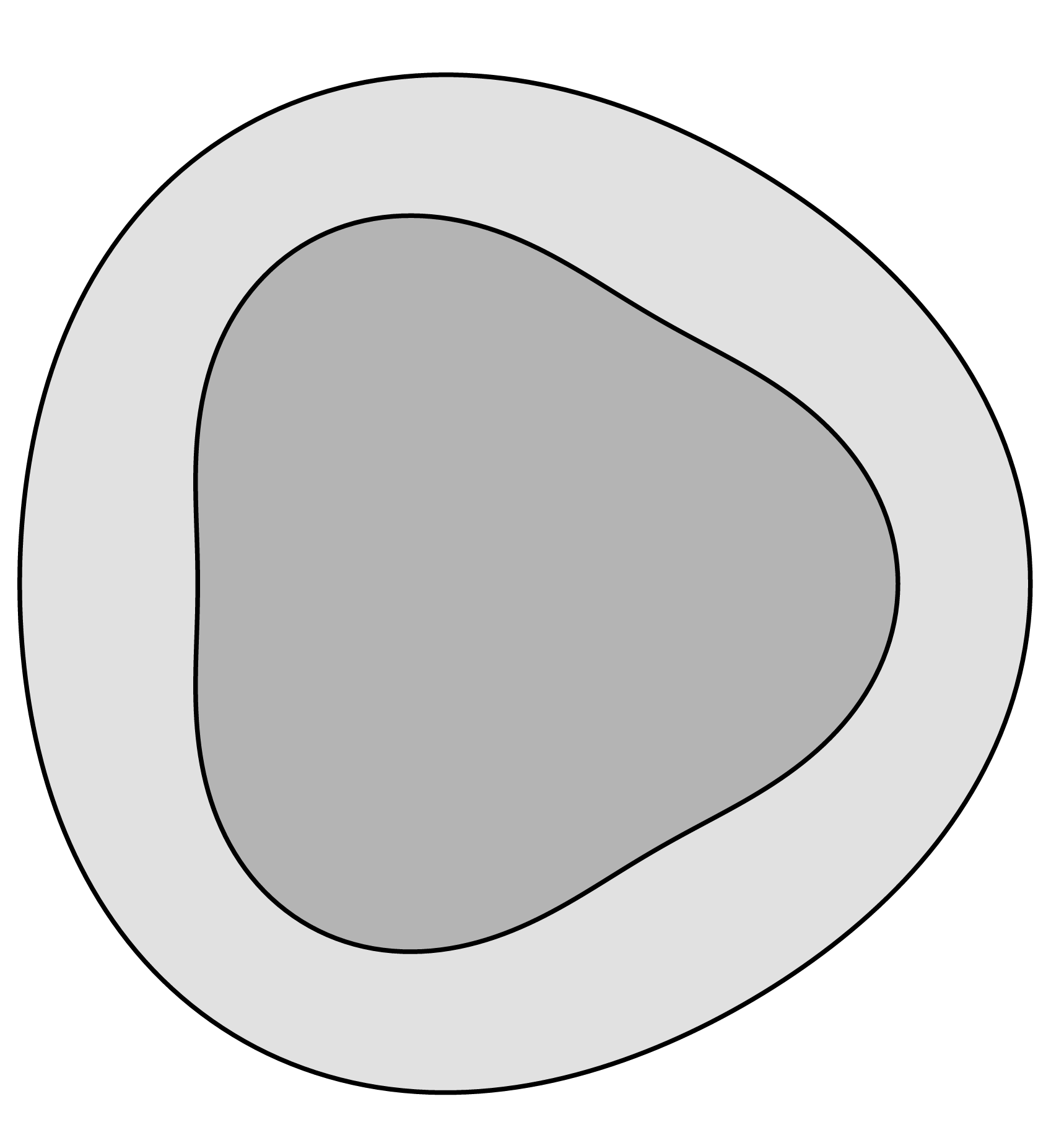}
\end{center}
\end{minipage}
\begin{minipage}{0.3\hsize}
\caption{Initial shape}
\label{fig:5}
\end{minipage}
\hfill
\begin{minipage}{0.3\hsize}
\caption{Final shape}
\label{fig:6}
\end{minipage}
\hfill
\begin{minipage}{0.3\hsize}
\caption{Analytical result}
\label{fig:7}
\end{minipage}
\end{figure}

Figures \ref{fig:5} and \ref{fig:6} show that the solution $\Omega$ of outer problem inherits the geometry of the core $D$ (loosely speaking, the ``bumps" of $\pa D$ and $\pa \Omega$ tend to match). 
Indeed, if $D$ is given by a small perturbation of a ball with normal component defined by the (possibly infinite) sum of some spherical harmonics as in the first expression of \eqref{h_in h_out exp}, Corollary \ref{heredity} shows that the solution $\Omega$ can be approximated by a perturbation of a concentric ball given by a specific weighted sum of the same spherical harmonics.   
Moreover, we can see that the numerical result (Figure \ref{fig:6}) is close to the analytical result given by the first order approximation based on Corollary \ref{heredity}, shown in Figure \ref{fig:7}. 

\subsection{When $D$ is small or $\sigma_c$ is close to $1$}
Here we analyze the two cases where the outer problem can be regarded as a perturbation of the one-phase Serrin problem, namely the case where the core $D$ is small and that where $\sg_c$ is close to $1$.

The third example is in the case where the core $D$ is a sufficiently small domain compared to the initial domain $\Omega_{0}$. We considered the core $D$ as the region enclosed by the curve $\{ ( 0.05\cdot(8+\cos 3t)\cos t, 0.05\cdot(8+\cos 3t)\sin t ) \,\, | \,\, t \in [0,2\pi) \}$. 

The fourth example shows what happens when we take $\sg_c$ sufficiently close to $1$. We defined $D$ to be the region enclosed by the curve $\{ ( 0.3\cdot(8+\cos 5t)\cos t, 0.3\cdot(8+\cos 5t)\sin t ) \,\, | \,\, t \in [0,2\pi) \}$. 
\begin{figure}[htbp]
\begin{minipage}{0.5\hsize}
\begin{center}
\includegraphics[width=50mm]{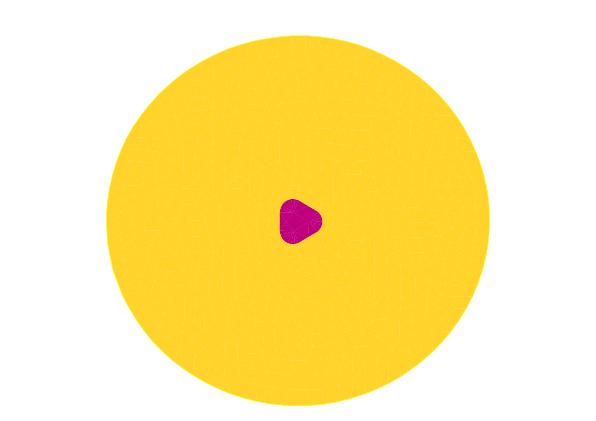}
\end{center}
\end{minipage}
\begin{minipage}{0.5\hsize}
\begin{center}
\includegraphics[width=50mm]{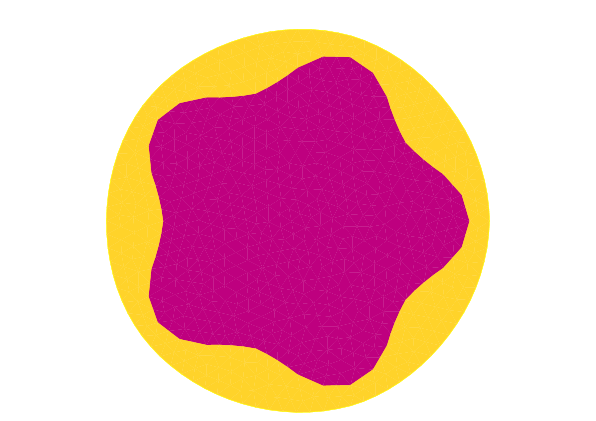}
\end{center}
\end{minipage}
\begin{minipage}{0.5\hsize}
\caption{When $D$ is small}
\label{fig:8}
\end{minipage}
\begin{minipage}{0.5\hsize}
\caption{When $\sg_c$ is close to $1$}
\label{fig:9}
\end{minipage}
\end{figure}

Figures \ref{fig:8} and \ref{fig:9} show that, as one would expect, the solution $\Omega$ of the outer problem is not influenced much by the geometry of the core $D$ and it is nearly a ball if either the size of the core $D$ is sufficiently small or the parameter $\sg_c$ is sufficiently close to $1$. These numerical results justify the intuition that the outer problem (Problem 2) is well approximated by a one-phase Serrin's problem when $D$ is small enough or $\sg_c \simeq 1$. 
\subsection{Different behaviors when $\sg_c\lessgtr 1$}
The fifth example is in the case where the core $D$ is the region enclosed by the curve $\{ ( 0.3\cdot(8+\cos 5t)\cos t, 0.3\cdot(8+\cos 5t)\sin t ) \,\, | \,\, t \in [0,2\pi) \}$ and the initial shape $\Omega_{0}$ is the disk of radius $3$ centered at the origin. Both cases $\sigma_c \lessgtr 1$ are considered. 
\begin{figure}[H]
\begin{minipage}{0.5\hsize}
\begin{center}
\includegraphics[width=50mm]{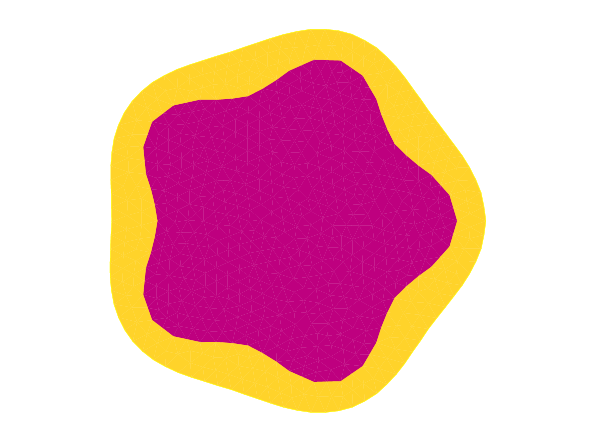}
\end{center}
\caption{Final shape of $\sg_c = 10 > 1$}
\label{fig:11}
\end{minipage}
\begin{minipage}{0.5\hsize}
\begin{center}
\includegraphics[width=50mm]{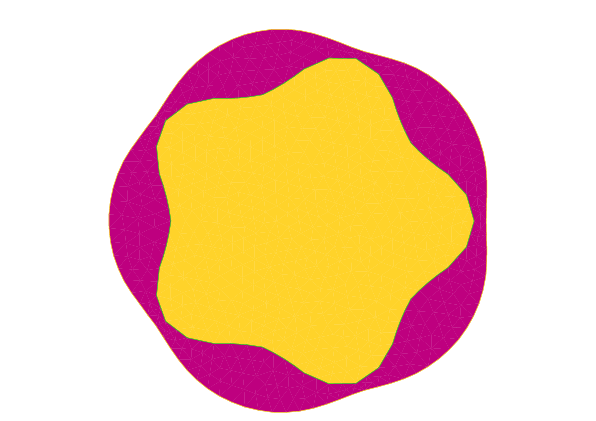}
\end{center}
\caption{Final shape of $\sg_c = 0.1 < 1$}
\label{fig:13}
\end{minipage}
\end{figure}

Figures \ref{fig:11}--\ref{fig:13} show the different behaviors of the cases $\sg_c \lessgtr 1$. If $\sg_c >1$, the solution $\Omega$ presents ``bumps'' that are aligned with those of $D$ in the same direction. On the other hand, if $\sg_c<1$, the ``bumps'' of $\Omega$ point in the opposite direction, thus facing those of $D$. This phenomenon is also predicted by Corollary \ref{heredity}. Indeed, notice that the coefficients $-\gamma_k/\beta_k$, that appear in the Fr\'echet derivative of $g=g(f)$, are positive when $\sg_c>1$ and become negative when $\sg_c<1$ and $k$ is large enough.   

\section{Some open problems and conjectures}\label{pro and con}
In this section, we state some open problems and conjectures. 

\begin{conjecture}\label{conj D small}
For fixed $V_0$, the solution $\Omega$ of the outer problem converges to a ball as the diameter of $D$ tends to $0$ (see Figure \ref{fig:8}).
\end{conjecture}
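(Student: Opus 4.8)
\emph{A possible approach.} The natural strategy is a compactness-and-stability argument that degenerates the two-phase problem into Serrin's one-phase problem in the limit. Fix $V_0$, let $(D_j)_j$ be any sequence of admissible cores with $\diam(D_j)\to 0$, and let $\Om_j$ be a solution of the outer problem for the pair $(D_j,V_0)$, with state $u_j$ solving $-\dv(\sg_j\gr u_j)=1$ in $\Om_j$, $u_j=0$ on $\pa\Om_j$, where $\sg_j=\sg_c$ on $D_j$ and $\sg_j=1$ on $\Om_j\setminus D_j$. It suffices to show that some subsequence of $(\Om_j)$ converges, in Hausdorff distance and up to translations, to a ball of volume $V_0$; since that ball is the unique solution of the limiting problem, the whole sequence then converges.

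The first step is to secure uniform a priori control on the family $\{\Om_j\}$: uniform boundedness, a uniform interior and exterior ball condition (equivalently, uniform $\cC^{2+\al}$ bounds on $\pa\Om_j$), and a uniform lower bound on $\dist(D_j,\pa\Om_j)$. Whenever the cores fall under the hypotheses of Theorem~\ref{mainthm1} --- for instance $D_j$ a small perturbation of a ball $B_{R_j}$ with $R_j\to 0$, so that $\sg_c\notin\Sigma$ holds automatically --- this control is inherited from the construction, and in fact Corollary~\ref{heredity} already confirms the conjecture, to first order, in that regime: for $D$ close to $B_R$ the coefficients governing the modes $k\ge 2$ of $\pa\Om$ tend to $0$ as $R\to 0$, while the mode $k=1$ is essentially a pure translation, so $\pa\Om$ flattens to a sphere. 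For general cores with $\diam(D_j)\to 0$ this uniform regularity --- and even the existence of $\Om_j$ --- must be assumed or proved separately, and I expect this to be the principal obstacle.

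Granting the a priori control, the key mechanism is that a vanishing inclusion has vanishing influence on the exterior state. Let $\ol u_j$ solve the one-phase problem $-\De\ol u_j=1$ in $\Om_j$, $\ol u_j=0$ on $\pa\Om_j$, and put $\phi_j=u_j-\ol u_j$. Subtracting the weak formulations and testing against $\phi_j$ gives $\|\gr\phi_j\|_{L^2(\Om_j)}\le C\,\|\gr\ol u_j\|_{L^2(D_j)}\le C\,\|\gr\ol u_j\|_{L^\infty(\Om_j)}\,|D_j|^{1/2}$, and since $\|\gr\ol u_j\|_{L^\infty}$ is bounded by elliptic regularity and $|D_j|\le C(\diam D_j)^N\to 0$, we obtain $\phi_j\to 0$ in $H^1(\Om_j)$ (for $N=2$ one argues instead through the logarithmic capacity of $D_j$, which still tends to $0$ but requires more care). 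Because $\phi_j$ is harmonic in $\Om_j\setminus\ol{D_j}$ and vanishes on $\pa\Om_j$, boundary Schauder estimates on a fixed neighborhood of $\pa\Om_j$ --- uniform thanks to the a priori regularity --- upgrade this to $\|\pa_n\phi_j\|_{L^\infty(\pa\Om_j)}\to 0$. Since $u_j$ satisfies the overdetermined condition exactly, $\pa_n u_j\equiv c(\Om_j)=-|\Om_j|/|\pa\Om_j|$, while the divergence theorem shows that the mean of $\pa_n\ol u_j$ over $\pa\Om_j$ equals $c(\Om_j)$ as well; hence $\osc_{\pa\Om_j}(\pa_n\ol u_j)=\osc_{\pa\Om_j}(\pa_n\phi_j)\to 0$. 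In other words, $\ol u_j$ is an asymptotically exact solution of the one-phase Serrin problem on $\Om_j$.

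To finish I would invoke a quantitative stability theorem for Serrin's overdetermined problem (in the spirit of Brandolini--Nitsch--Salani--Trombetti and of Magnanini--Poggesi): under the uniform regularity already in hand, smallness of $\osc_{\pa\Om_j}(\pa_n\ol u_j)$ forces $\Om_j$ to lie within a quantitatively small Hausdorff distance of a ball, and the constraint $|\Om_j|=V_0$ then pins down the radius, yielding convergence to a ball of volume $V_0$. Equivalently, one could extract a limit domain $\Om_\infty$ directly, show $u_j\to u_\infty$ with $-\De u_\infty=1$, $u_\infty=0$ and $\pa_n u_\infty=c(\Om_\infty)$ on $\pa\Om_\infty$, and apply Serrin's symmetry theorem. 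Either way, the entire difficulty is concentrated in the a priori compactness and uniform regularity of the family of outer-problem solutions (and, to a lesser extent, in the two-dimensional capacity subtlety and in the existence question for general small $D_j$); the vanishing-inclusion estimate and the appeal to Serrin stability are comparatively routine.
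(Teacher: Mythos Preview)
The statement is a \emph{conjecture} in the paper, not a theorem: the authors present it explicitly as open and give no proof. Their supporting evidence is the numerical experiment in Figure~\ref{fig:8} together with the remark, immediately following the conjecture, that the coefficients $-\gamma_k/\beta_k$ in Corollary~\ref{heredity} tend to $0$ as $R\to 0$. They then stress that this does \emph{not} constitute a proof, even in the local regime, because the existence threshold $\varepsilon$ in Theorem~\ref{ift applied} depends on $R$ and $\sigma_c$, so one is not entitled to pass to the limit $R\to 0$ inside Corollary~\ref{heredity} without a uniform estimate on $\varepsilon$.

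There is therefore no proof in the paper to compare your proposal against; but it is worth noting that the obstacle you isolate is exactly the one the authors identify. Your scheme --- compare $u_j$ with the one-phase torsion function $\ol u_j$, show $\osc_{\pa\Om_j}(\pa_n\ol u_j)\to 0$, and invoke quantitative Serrin stability --- is sound \emph{conditionally}, but everything rests on the uniform a priori $\cC^{2+\al}$ control of $\pa\Om_j$ and the uniform lower bound on $\dist(D_j,\pa\Om_j)$, neither of which you prove. Without these, the boundary Schauder step that upgrades $H^1$ smallness of $\phi_j$ to $L^\infty$ smallness of $\pa_n\phi_j$ on $\pa\Om_j$ is unavailable, and the stability results you cite (\cite{BNST2008}, \cite{MagnaniniPoggesi2017}) require precisely such uniform control to yield a Hausdorff-distance conclusion. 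Moreover, for general small cores $D_j$ lying outside the perturbative range of Theorem~\ref{ift applied}, even the \emph{existence} of a solution $\Om_j$ is open in the paper (this is exactly the global existence problem the authors pose in Section~\ref{pro and con}). Your proposal thus correctly identifies the mechanism that should drive the conjecture, and your flagging of the a priori compactness as ``the principal obstacle'' agrees with the paper's own assessment; but as written it is a program rather than a proof, and the missing ingredient is precisely the one the paper declares open.
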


\begin{conjecture}\label{conj si to 1}
For fixed $V_0$ and $D$, the solution $\Omega$ of the outer problem converges to a ball as $\sg_c\to 1$ (See Figure \ref{fig:9}).
\end{conjecture}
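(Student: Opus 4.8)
The starting point is that, as $\sg_c\to 1$, the coefficient $\sg=\sg(D,\Omega)$ tends uniformly to the constant $1$ and the interface $\pa D$ ``disappears'': problem \eqref{odp} collapses to the classical one-phase Serrin problem $-\Delta u=1$ in $\Omega$, $u=0$ on $\pa\Omega$, $\pa_n u\equiv c$ on $\pa\Omega$, whose only solutions subject to $|\Omega|=V_0$ are balls of volume $V_0$ by \cite{Se1971}. The plan is to turn this heuristic into a convergence statement by a compactness-and-rigidity argument: I would show that the family $\{\Omega_{\sg_c}\}$ of solutions of the outer problem is precompact in $\cC^{2}$, identify each subsequential limit as a classical solution of the one-phase Serrin problem (hence a ball), and conclude that $\dist(\Omega_{\sg_c},\mathcal B)\to 0$, where $\mathcal B$ is the compact, finite-dimensional family of volume-$V_0$ balls containing $\ol D$. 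This is the sense in which I would read ``converges to a ball'' here.

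For the compactness step, fix a sequence $\sg_c^{(j)}\to 1$ and solutions $\Omega_j$ with $|\Omega_j|=V_0$ and $\ol D\subset\Omega_j$. Granting uniform $\cC^{2+\al}$ control of $\pa\Omega_j$ (see the obstacle below), one passes to a subsequence with $\Omega_j\to\Omega_\infty$ in $\cC^{2}$, so $|\Omega_\infty|=V_0$ and $\ol D\subseteq\ol{\Omega_\infty}$. Since the coefficients $\sg_j:=\sg(D,\Omega_j)\to 1$ uniformly while staying in a fixed compact subset of $(0,\infty)$, the energy estimate for \eqref{pb d} together with elliptic and transmission Schauder estimates gives $u_j=u(D,\Omega_j)\to u_\infty$ in $\cC^1(\ol{\Omega_\infty})$ --- and in $\cC^{2+\al'}_{\mathrm{loc}}$ inside $\Omega_\infty$, $\pa D$ being a removable interface in the limit --- with $-\Delta u_\infty=1$ in $\Omega_\infty$ and $u_\infty=0$ on $\pa\Omega_\infty$. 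As $\pa_n u_j\equiv c(\Omega_j)=-V_0/|\pa\Omega_j|\to -V_0/|\pa\Omega_\infty|$, one gets $\pa_n u_\infty\equiv c(\Omega_\infty)$ on $\pa\Omega_\infty$, so Serrin's theorem forces $\Omega_\infty\in\mathcal B$; since every subsequence admits such a further subsequence, the whole family converges to $\mathcal B$.

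The wording of the conjecture presupposes that a solution $\Omega_{\sg_c}$ exists for $\sg_c$ near $1$, and since $D$ is not assumed close to a ball, Theorem \ref{mainthm1} does not cover this, so one should supply existence separately. The natural route is to run Theorem \ref{ift} with $\sg_c$ itself as the independent variable, about the base point $(\ol D,B_\rho(z))$ with $g=0$, where $B_\rho$ is the ball of volume $V_0$. One computes (it is the one-phase Serrin linearization at $B_\rho$, equivalently the $\sg_c\to1$ limit of \eqref{beta_k}) that the linearization in $g$ at $\sg_c=1$ is diagonal in spherical harmonics with $k$-th eigenvalue $(k-1)/N$; it is therefore an isomorphism off the $k=1$ modes, which are exactly the infinitesimal translations of $B_\rho$. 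A Lyapunov--Schmidt reduction then leaves an $N$-dimensional equation for the centre $z$, and the subtle point is that this reduced equation is itself degenerate at $\sg_c=1$ (translating $B_\rho$ still solves the problem when $\sg_c=1$), so the branch bifurcates from the whole $N$-parameter family $\mathcal B$ and a finer expansion in $\sg_c-1$ is needed to locate it; once constructed, $g(\sg_c)\to 0$ gives $\Omega_{\sg_c}\to B_\rho$ directly, with a correcting homothety restoring $|\Omega|=V_0$ as in Remark \ref{about volume constraint}.

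The decisive difficulty is the uniform a priori regularity of the moving boundary $\pa\Omega_{\sg_c}$ as $\sg_c\to 1$: one must rule out that $\pa\Omega_{\sg_c}$ pinches onto $\pa D$, develops cusps, or becomes arbitrarily eccentric (its volume is fixed but its perimeter, hence its diameter, is a priori not controlled from above). I would look for such bounds through a quantitative stability estimate for Serrin's problem --- in the spirit of \cite{MagnaniniPoggesi2017, BNST2008} --- transplanted to the two-phase transmission setting, or through direct barrier and Harnack arguments in the region near $\pa\Omega$ where $\sg\equiv 1$; this is where the real work lies, the translation degeneracy of the linearization being a comparatively mild and standard obstruction. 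Finally, one intrinsic caveat: the argument above does not pin down the centre of the limiting ball, so the conclusion is necessarily that $\Omega_{\sg_c}$ approaches \emph{some} ball of volume $V_0$, not a prescribed one.
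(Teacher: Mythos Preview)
There is no proof in the paper to compare against: the statement is a \emph{conjecture}, placed in Section~\ref{pro and con} (``Some open problems and conjectures''). The paper offers only two pieces of evidence --- the numerical experiment of Figure~\ref{fig:9}, and the remark immediately following the conjecture that the coefficients $-\gamma_k/\beta_k$ of Corollary~\ref{heredity} tend to $0$ as $\sg_c\to 1$ --- and then explicitly disclaims: ``Unfortunately this does not constitute a rigorous proof (not even in the local case).'' So your proposal is not an alternative to the paper's argument; it is an attempt at something the authors leave open.

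Viewed on its own terms, your outline is a sensible program and you are candid about where it is incomplete. The compactness-plus-Serrin-rigidity scheme is the natural one, and you correctly isolate the decisive missing ingredient: uniform $\cC^{2+\alpha}$ control of $\pa\Omega_{\sg_c}$ as $\sg_c\to 1$, including exclusion of pinching onto $\pa D$ and of unbounded eccentricity. Until that is supplied, the argument is a heuristic, not a proof --- which is precisely the status the paper assigns to the statement. Your side discussion of existence via the implicit function theorem in the variable $\sg_c$ is also reasonable, and you correctly flag the translation degeneracy at $\sg_c=1$ (kernel on the $k=1$ modes); note, though, that this route would at best produce a \emph{local} branch near a ball, whereas the conjecture is stated for a fixed, possibly far-from-spherical $D$, so even a successful Lyapunov--Schmidt analysis would not settle the conjecture as written. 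In short: your write-up is an appropriate research sketch for an open problem, but it should not be presented as a proof, and the paper does not claim one either.
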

We believe that the techniques developed in \cite{CLM2012, L2014} for the asymptotic expansion of the solution of an eigenvalue problem with respect to the same two-phase operator $-\dv(\sg \gr\cdot )$ might be helpful in proving Conjectures \ref{conj D small}--\ref{conj si to 1}.
Moreover, we notice that Corollary \ref{heredity} constitutes a strong evidence for Conjectures \ref{conj D small}--\ref{conj si to 1}. As a matter of fact, we see that the coefficients $-\gamma_k/\beta_k$ converge to $0$ as $R\to 0$ or $\sg_c\to 1$. Unfortunately, this does not constitute a rigorous proof (not even in the local case). Indeed, Theorem \ref{ift applied} ensures unique solvability of the outer problem with respect to deformed core $D_f$, only for $\norm{f}_{\mathcal{X}}<\varepsilon$, where $\varepsilon$ depends on the parameters $R$ and $\sigma_c$. In particular, we are not allowed to take the limits as $R\to 0$ or $\sg_c\to1$ of the expression in Corollary \ref{heredity} unless we have a uniform estimate on the above-mentioned existence threshold $\varepsilon$. This is a further motivation for studying the following problem.  

\begin{prob}
Study {\bf global} existence and uniqueness for the outer problem.
\end{prob}
 \begin{remark}
Showing global existence and uniqueness is a difficult task at this stage because, to our knowledge, there does not exist any comparison result for this kind of problem. In particular, we did not succeed in generalizing the approach of subsolutions and supersolutions by Beurling (see for example \cite{Beurling 1957, Henrot Shahgholian 2002}). Nevertheless, we think that this might be a valuable tool for proving the following three conjectures. 
\end{remark}

\begin{conjecture}
If $\sigma_c>1$, then there exists a threshold $V^*\ge |D|$ such that for all $V_0>V^*$, the outer problem has a unique solution $\Omega$. In particular, if $D$ is not a ball, then $V^*>|D|$ and the boundaries $\pa D$ and $\pa \Omega$ touch in the limit as $V_0\to V^*$. 
\end{conjecture}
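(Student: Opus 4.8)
The statement above is only a conjecture, so what follows is a plausible line of attack rather than a complete argument. The natural approach is to view the outer problem as a one‑parameter family of free boundary problems and to attack it by an adaptation of Beurling's method of sub- and supersolutions to the transmission setting. For $\lambda>0$ let $\mathcal P_\lambda$ be the problem of finding a domain $\Omega$ with $\ol D\subset\Omega$ whose Dirichlet solution $u_\Omega$ of $-\dv(\sg\gr u_\Omega)=1$ in $\Omega$, $u_\Omega=0$ on $\pa\Omega$, satisfies $|\gr u_\Omega|\equiv\lambda$ on $\pa\Omega$. Integrating the equation over $\Omega$ shows that any solution of $\mathcal P_\lambda$ automatically satisfies $\lambda=|\Omega|/|\pa\Omega|=-c(\Omega)$, so $\mathcal P_\lambda$ is equivalent to the overdetermined problem \eqref{odp} and the map $\lambda\mapsto|\Omega|$ records the volume of the solution. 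The cornerstone would be a transmission comparison principle: if $\ol D\subset A\subset B$, then $w:=u_B-u_A$ solves $\dv(\sg\gr w)=0$ in $A$ with $w\ge0$ on $\pa A$, hence $w>0$ in $A$ by the strong maximum principle for $\dv(\sg\gr\,\cdot\,)$; consequently, at any point $x_0\in\pa A\cap\pa B$ where the two boundaries touch (and which does not lie on $\pa D$), Hopf's lemma yields $|\gr u_A(x_0)|<|\gr u_B(x_0)|$. Thus enlarging $\Omega$ strictly increases the boundary gradient, which is exactly the monotonicity that makes a sub/supersolution scheme feasible.

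Building on this, I would proceed as follows. (i) Call $A\supset\ol D$ a \emph{subsolution} (resp.\ \emph{supersolution}) for $\mathcal P_\lambda$ if $|\gr u_A|\le\lambda$ (resp.\ $\ge\lambda$) on $\pa A$, and show that a sufficiently large ball $B_{\mathrm{sup}}$ is a supersolution while a thin neighbourhood of $\ol D$ is a subsolution once $\lambda$ is large. (ii) Prove the lattice property (the union of two subsolutions contained in a common supersolution is again a subsolution) and conclude that, for such $\lambda$, $\Omega(\lambda):=\bigcup\{A:\ol D\subset A\subset B_{\mathrm{sup}},\ A\text{ subsolution}\}$ is the maximal subsolution; a local perturbation at boundary points where $|\gr u|<\lambda$ then shows that $\Omega(\lambda)$ is in fact a solution of $\mathcal P_\lambda$, which requires free-boundary regularity for the transmission problem. (iii) The comparison principle gives $\Omega(\lambda_1)\subset\Omega(\lambda_2)$ for $\lambda_1<\lambda_2$, so $\lambda\mapsto|\Omega(\lambda)|$ is nondecreasing; set $\lambda^\ast:=\inf\{\lambda:\mathcal P_\lambda\text{ solvable}\}$ and $V^\ast:=\lim_{\lambda\downarrow\lambda^\ast}|\Omega(\lambda)|\ge|D|$. (iv) Uniqueness for $V_0>V^\ast$ follows, since any solution of $\mathcal P_\lambda$ with $\lambda>\lambda^\ast$ is at once a subsolution and a supersolution, hence squeezed between the maximal subsolution and the minimal supersolution, both equal to $\Omega(\lambda)$.

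It then remains to locate $V^\ast$. If $D=B_R$, the concentric configuration $B_R\subset B_\rho$ solves \eqref{odp} for every $\rho>R$ (by flux matching $u'(r)=-r/N$ outside $B_R$, independently of $\sg_c$), so $V^\ast=|D|$. If $D$ is not a ball, set $\Omega_\infty:=\bigcap_{\lambda>\lambda^\ast}\Omega(\lambda)$ and argue by a dichotomy at $\lambda^\ast$: either $\pa\Omega_\infty$ touches $\pa D$, or $\ol D$ is still compactly contained in $\Omega_\infty$. In the latter case $\Omega_\infty$ is a genuine two-phase solution of $\mathcal P_{\lambda^\ast}$, so an analogue of the implicit function argument of Theorem~\ref{ift applied}, now centred at $(D,\Omega_\infty)$, would continue the family to some $\lambda<\lambda^\ast$, contradicting the definition of $\lambda^\ast$; hence $\pa\Omega_\infty$ touches $\pa D$. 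Finally, if moreover $\Omega_\infty=D$ (total collapse of the annulus), passing the overdetermined condition to the limit across the collapsing region $\Omega(\lambda)\setminus D$ should force, by flux matching, that the limit solution in $D$ has constant normal derivative on $\pa D$, i.e.\ a Serrin-type rigidity making $D$ a ball — a contradiction; therefore $V^\ast=|\Omega_\infty|>|D|$, and $\pa\Omega$ touches $\pa D$ as $V_0\downarrow V^\ast$.

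The crux of the difficulty, and the place where the hypothesis $\sg_c>1$ is expected to enter, is the behaviour of the free boundary $\pa\Omega(\lambda)$ as it approaches $\pa D$: no comparison or sub/supersolution theory has been developed for this overdetermined transmission problem, and whether the free boundary can be pushed against $\pa D$ without losing regularity plausibly depends on the sign of $\sg_c-1$ (this is consistent with the coefficients $-\gamma_k/\beta_k$ of Corollary~\ref{heredity} being positive exactly when $\sg_c>1$, encoding a monotone, ``aligned'' dependence of $\Omega$ on $D$, whereas for $\sg_c<1$ the bumps face each other and the construction may break down). Two further obstacles are the Serrin-type rigidity in the degenerate thin-annulus limit needed to exclude $\Omega_\infty=D$, which requires a careful asymptotic analysis of $u_{\Omega(\lambda)}$ across the collapsing annulus, and ruling out a spectral degeneration of the linearized operator at some $\lambda>\lambda^\ast$ (which would create a bifurcation before touching and break uniqueness); the latter would follow from a positivity property of that operator for $\sg_c>1$, generalizing the fact that $\beta_k\ne0$ for all $k$ in the concentric case. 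Each of these seems to need a genuinely new idea, which is why the statement is phrased only as a conjecture.
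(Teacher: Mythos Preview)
The statement is a conjecture, and the paper offers no proof; in fact the remark immediately preceding this conjecture explicitly says that the authors tried to adapt Beurling's sub/supersolution scheme to the transmission setting and did not succeed, precisely because ``there does not exist any comparison result for this kind of problem.'' Your proposal is therefore not a different route from the paper --- it is exactly the route the paper names as the natural one and leaves open. You are appropriately candid that what you wrote is a strategy rather than a proof, and your identification of the monotonicity (larger $\Omega$ gives larger $|\nabla u|$ at touching points, opposite to the classical Bernoulli case because of the source term) is correct and is the right starting observation.

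That said, two of the steps you flag as routine are in fact the heart of the open problem. First, the lattice property (ii) --- that the union of two subsolutions is again a subsolution --- does not follow from the touching comparison alone; in Beurling's original argument it relies on subharmonicity of a specific auxiliary function, and finding the right analogue for the inhomogeneous transmission problem is exactly the missing ``comparison result'' the paper alludes to. Second, your continuation argument at $\lambda^\ast$ assumes an implicit function theorem centred at an \emph{arbitrary} configuration $(D,\Omega_\infty)$. Theorem~\ref{ift applied} is proved only at concentric balls, where the linearised operator can be diagonalised by spherical harmonics and the condition $\sigma_c\notin\Sigma$ is read off explicitly; at a generic $(D,\Omega_\infty)$ one has no such spectral information, so the invertibility of $\partial_y\Psi$ there is an assumption, not a consequence. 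Your step (iv) on uniqueness also hides a gap: to squeeze an arbitrary solution between the maximal subsolution and the minimal supersolution one first needs that every subsolution is contained in every supersolution, which again requires a global comparison argument (sliding or Lavrentiev-type) that has not been established for this problem. Finally, the limiting Serrin rigidity you invoke to exclude $\Omega_\infty=D$ would need a controlled asymptotic expansion of $u_{\Omega(\lambda)}$ across a collapsing annulus with a jump in $\sigma$, which is a separate analytic problem. None of this means your outline is wrong-headed --- it is the natural program --- but the places you label as steps are the open questions.
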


\begin{conjecture}
For fixed $D$, the solutions of the outer problem form an elliptically ordered family. In other words, if $\Omega_1$ and $\Omega_2$ denote two solutions of the outer problem with respect to the same core $D$ and $|\Omega_1|<|\Omega_2|$, then $\Omega_1\subset \Omega_2$.
\end{conjecture}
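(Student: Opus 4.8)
The plan is to derive the ordering from a comparison principle for the torsion functions of nested solutions, exploited through a Beurling-type sub/supersolution scheme, and cross-checked against the local structure of solutions near concentric balls coming from Theorem~\ref{ift applied}.

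First I would fix the core $D$ and, for any domain $\Omega\supset\ol D$, let $u_\Omega$ be the solution of $-\dv(\sg\gr u)=1$ in $\Omega$, $u=0$ on $\pa\Omega$; recall that $\Omega$ is a solution of \eqref{odp} precisely when $\pa_n u_\Omega\equiv c(\Omega)=-|\Omega|/|\pa\Omega|$ on $\pa\Omega$. Call $\Omega$ a \emph{subsolution} (resp.\ a \emph{supersolution}) when $\pa_n u_\Omega\ge c(\Omega)$ (resp.\ $\le c(\Omega)$) on $\pa\Omega$. The decisive ingredient would be a comparison lemma in the spirit of Beurling: \emph{if $\Omega\subset\Omega'$ with $\ol D\subset\Omega$, $\Omega$ a supersolution and $\Omega'$ a subsolution, then $\Omega=\Omega'$.} Its natural mechanism is the maximum principle for the transmission problem solved by $w:=u_{\Omega'}-u_\Omega$ on $\Omega$: from the interface conditions inherited by $u_\Omega$ and $u_{\Omega'}$ one gets $[\sg\,\pa_n w]=0$ and $[w]=0$ on $\pa D$ and $-\dv(\sg\gr w)=0$ off $\pa D$, while $w>0$ on $\pa\Omega$; hence $w>0$ in $\Omega$ and, by the Hopf lemma at the smooth surface $\pa\Omega$ (where $\sg\equiv1$), one has $\pa_n u_{\Omega'}<\pa_n u_\Omega$ on $\pa\Omega$. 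The snag — which is exactly the comparison result the authors flag as missing — is that the overdetermined data of $\Omega$ and $\Omega'$ are prescribed on $\pa\Omega$ and on $\pa\Omega'$ respectively, so this pointwise information on $\pa\Omega$ cannot be fed back into the condition on $\pa\Omega'$ without extra global input.

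Granting the lemma, the conjecture follows by the standard sandwiching argument: if $\Omega_1$ and $\Omega_2$ were solutions with $|\Omega_1|<|\Omega_2|$ and $\Omega_1\not\subset\Omega_2$, one regularizes the lattice competitors $\Omega_1\cap\Omega_2\subsetneq\Omega_1\cup\Omega_2$ (both still containing $\ol D$) and checks, using the variational characterization of $u_\Omega$, that $\Omega_1\cup\Omega_2$ is a subsolution and $\Omega_1\cap\Omega_2$ a supersolution; the comparison lemma then forces $\Omega_1\cap\Omega_2=\Omega_1\cup\Omega_2$, i.e.\ $\Omega_1=\Omega_2$, a contradiction. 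An independent route, valid at least near the concentric balls, is to continue in $V_0$ the branch of solutions produced by Theorem~\ref{ift applied} together with the correcting homothety of Remark~\ref{about volume constraint}: if the linearized operator $\pa_y\Psi$ stays invertible along the branch — in analogy with the spectral computation leading to \eqref{beta_k} — the branch is a connected $\cC^1$ curve $V_0\mapsto\Omega(V_0)$, and, provided its tangent stays a pointwise positive normal velocity field, one obtains local nestedness and hence, by connectedness, global nestedness.

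The hard part is therefore the comparison lemma itself; as the authors observe, no Beurling-type comparison is presently available for \eqref{odp}. I would try to establish it by transporting the $P$-function and Weinberger integral-identity machinery of the one-phase Serrin problem to the piecewise constant coefficient $\sg$, carrying the transmission conditions $[\sg\,\pa_n u]=0$ and $[u]=0$ on $\pa D$ through the Pohozaev identity and controlling the interfacial contributions (whose sign is likely tied to whether $\sg_c\lessgtr1$); alternatively, one could pursue a variational comparison built on the Kohn--Vogelius functional \eqref{def kv}, whose zero set coincides with the solution set, by showing that its least critical value at volume $V_0$ is monotone in $V_0$ and that the associated minimizing flow preserves inclusions. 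Either way, a monotonicity/comparison principle for the two-phase torsion problem is, in my view, the one obstruction separating the present local results from the full conjecture.
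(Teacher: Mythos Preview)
The paper does not prove this statement: it is listed as an open conjecture in Section~\ref{pro and con}, and the authors explicitly remark that no comparison principle of Beurling type is currently available for problem~\eqref{odp}. There is therefore no proof in the paper to compare your proposal against.

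Your proposal is candid about this: you reduce the conjecture to a Beurling-style comparison lemma and then state openly that this lemma is \emph{the} missing ingredient. That diagnosis matches the paper's own assessment precisely. What you have written is thus a plausible strategy rather than a proof, and you say as much yourself.

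Two comments on the sketch. First, the assertion that $\Omega_1\cup\Omega_2$ is a subsolution and $\Omega_1\cap\Omega_2$ a supersolution is not substantiated: in classical Bernoulli problems this kind of lattice property follows from a specific monotonicity of the free-boundary data, but for \eqref{odp} the overdetermined constant is $c(\Omega)=-|\Omega|/|\pa\Omega|$, which depends on the domain in a non-monotone way under unions and intersections, so the claimed sub/super status of the lattice competitors needs its own argument. Second, $\Omega_1\cap\Omega_2$ and $\Omega_1\cup\Omega_2$ are generally non-smooth at the crossings of $\pa\Omega_1$ and $\pa\Omega_2$; ``regularizing'' them while preserving the required inequalities is itself a nontrivial step. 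Neither point breaks a proof that does not yet exist, but they are additional obstacles on top of the comparison lemma you already flag as open.
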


\begin{conjecture}
If $\sg_c>1$ and $D$ is convex, then any solution $\Omega$ of the outer problem is convex.
\end{conjecture}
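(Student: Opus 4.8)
The plan is to attack the conjecture by a \emph{continuity method in shape space}, anchored at concentric balls and using the convexity of $D$ to run a connectedness argument. Fix $V_0$ large, e.g.\ in the regime of the global existence/uniqueness conjecture stated above; this is itself open, so it would have to be granted or established first, say by minimising $\cF$ over convex competitors or by a monotone Beurling-type iteration. For a convex body $D$ with $|D|<V_0$, connect it to a concentric ball through the Minkowski path $D_s=(1-s)\,B_R+s\,D$, $s\in[0,1]$, whose members are all convex with $|D_s|<V_0$ for a suitable $R$; let $\Omega_s$ be the corresponding solution of the outer problem, and note that $\Omega_0$ is a ball. Set $S=\{s\in[0,1]:\pa\Omega_s\text{ is strictly convex}\}$; since $0\in S$, it suffices to show that $S$ is open and closed.

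Openness is the easy half. It follows from the $\cC^{2+\alpha}$-continuity of the solution map $s\mapsto\Omega_s$ — which one extracts from the implicit-function-theorem machinery of Section~\ref{proof of existence and uniqueness} once a global uniqueness/stability statement is available — together with the fact that strict positivity of the second fundamental form is an open condition in $\cC^2$. As further heuristic support, the linearised analysis of Corollary~\ref{heredity} shows the map $D\mapsto\Omega$ is smoothing, the Fr\'echet-derivative coefficients $-\gamma_k/\beta_k$ decaying geometrically in $k$, which is at least consistent with the preservation of convexity.

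Closedness is the crux and amounts to a \emph{convexity maximum principle}: one must show that a solution $\Omega$ whose core $D$ is convex, and which is itself convex, is automatically \emph{strictly} convex — the alternative in a constant-rank dichotomy, namely that the second fundamental form of $\pa\Omega$ degenerates at every point, being excluded by compactness of $\pa\Omega$. Granting this, if $s_0$ were the supremum of the connected component of $S$ containing $0$ and $s_0<1$, then $\Omega_{s_0}$ would be convex as a Hausdorff limit, hence strictly convex, hence (by openness) $\Omega_s$ would be strictly convex for $s$ slightly beyond $s_0$ — a contradiction; so $S=[0,1]$. The convexity statement itself would be proved by adapting the constant-rank / concavity-maximum-principle technique used for the convexity of free boundaries in Bernoulli problems (Acker, Henrot--Shahgholian, and others) and for the concavity of solutions (Korevaar, Kennington, Caffarelli--Friedman): in the annulus $A=\Omega\setminus\ol D$ one has the clean equation $-\De v=1$ with overdetermined data $v=0$, $\pa_n v=c$ on $\pa\Omega$; one writes a degenerate-elliptic differential inequality on $\pa\Omega$ for the smallest principal curvature (or for an elementary symmetric function of the second fundamental form) and invokes the strong maximum principle, the convexity of $D$ entering as a favourable sign in the Hopf-type term that the argument produces on $\pa D$.

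The main obstacle — and the reason this remains a conjecture — is precisely this last step in the two-phase setting. Any auxiliary quantity built from $v$ inherits the kink of $v$ across $\pa D$, where only $[v]=0$ and $[\sg\,\pa_n v]=0$ hold, so the maximum principle cannot be applied naively on all of $\ol\Omega$; one is forced either to localise the curvature estimate to a neighbourhood of $\pa\Omega$ on which $v$ is harmonic — controlling the interface through the Cauchy data that $v$ inherits on $\pa D$ and using convexity of $D$ to sign that data favourably — or to develop a genuinely two-phase convexity maximum principle compatible with the transmission conditions. As the authors themselves emphasise, the complete absence of any comparison principle for \eqref{odp} is the fundamental difficulty here; for the same reason the global-existence input needed even to set up the continuity argument is itself unresolved.
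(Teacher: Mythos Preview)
The paper does not contain a proof of this statement: it is listed as a \textbf{Conjecture} in Section~\ref{pro and con} (``Some open problems and conjectures'') with no argument attached. Indeed, the surrounding remark explicitly says that global existence and uniqueness is ``a difficult task at this stage, because to our knowledge there does not exist any comparison result for this kind of problem'', and that the authors ``did not succeed in generalizing the approach of subsolutions and supersolutions by Beurling''. So there is nothing to compare against.

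Your proposal, to its credit, is not really claiming to be a proof either: you lay out a continuity-method strategy and then honestly flag that both of its load-bearing inputs are open. Specifically, (i) the openness step requires a global solution map $s\mapsto\Omega_s$ with $\cC^{2+\alpha}$ continuity, which in turn presupposes global existence and uniqueness for the outer problem --- exactly the open Problem~7.3 of the paper; and (ii) the closedness step requires a constant-rank / convexity maximum principle adapted to the transmission problem, which you yourself identify as the ``main obstacle''. Both gaps are genuine and neither is close to being filled by the material in the paper: Theorem~\ref{ift applied} and Corollary~\ref{heredity} are purely local near concentric balls and say nothing about arbitrary convex $D$, and the decay of $-\gamma_k/\beta_k$ is only linearised evidence at the radial configuration, not a convexity-preservation statement along the path $D_s$.

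In short: there is no proof in the paper, your outline is a plausible line of attack, but as written it is a programme rather than a proof, and its two principal ingredients are themselves open problems acknowledged by the authors.
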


\section*{Acknowledgements}
We would like to sincerely thank the anonymous reviewers for their comments and observations, which helped a lot to improve the overall readability of the paper and to better highlight the position of this work in the context of the existing literature.   

\begin{small}

\end{small}
\end{document}